\newcommand\comment[1]{}
\newcommand{\Rev}[1]{{\color{black}#1}}
\newcommand{\E}[2][]{\operatorname{\mathbb{E}}_{#1}\left[ #2\right]} 
\newcommand{\A}[4]{\mathcal{A}\left( #1;#2;#3;#4 \right)} 
\newcommand{\B}[2]{\mathcal{B}\left( #1;#2 \right)} 
\newcommand{\Var}[2][]{\operatorname{Var}_{#1}\left[ #2\right]} 
\newcommand{\Cov}[2][]{\operatorname{Cov}_{#1}\left[ #2\right]} 
\newcommand{\e}{{\epsilon}} 
\newcommand{\R}{\mathbb{R}} 
\newcommand{\N}{\mathbb{N}} 
\newcommand{\LL}{\mathcal{L}} 
\newcommand{\cP}{\mathcal{P}}
\newcommand{\cD}{\mathcal{D}}
\newcommand{\cT}{\mathcal{T}}
\newcommand{\cQ}{\mathcal{Q}}
\newcommand{\cE}{\mathcal{E}}
\newcommand{\cC}{\mathcal{C}}
\newcommand{\cG}{\mathcal{G}}
\newcommand{\cH}{\mathcal{H}}
\newcommand{\iid}{i.i.d.\xspace}
\newcommand{\algref}[1]{{\rm Algorithm}~\ref{alg:#1}}
\newcommand{\secref}[1]{{\rm Section}~\ref{sec:#1}}
\newcommand{\thmref}[1]{{\rm Theorem}~\ref{thm:#1}}
\newcommand{\tabref}[1]{{\rm Table}~\ref{tab:#1}}
\newcommand{\lemref}[1]{{\rm Lemma}~\ref{lem:#1}}
\newcommand{\assref}[1]{{\rm Assumption}~\ref{assm:#1}}
\newcommand{\subassref}[2]{{\rm Assumption}~\ref{assm:#1}.\ref{sub:#2}}
\newcommand{\defref}[1]{{\rm Definition}~\ref{def:#1}}
\newcommand{\minimize}{\operatornamewithlimits{minimize}}
\newcommand{\fGamma}{\mathrm{\Gamma}}
\newtheorem{theorem}{Theorem}
\theoremstyle{definition}
\newtheorem{definition}{Definition}
\newtheorem{assumption}{Assumption}
\newtheorem{lemma}{Lemma}
\begin{document}
    
    \title{Multistart Algorithm for Identifying All\\ Optima of Nonconvex Stochastic Functions
    }
    
    
  \author[1]{Prateek Jaiswal\thanks{Corresponding author: \texttt{jaiswalp@tamu.edu}}}

  \author[2]{Jeffrey Larson}

  \affil[1]{\small Department of Statistics,  Texas A\&M University, College Station, TX 77843, USA}
  \affil[2]{Mathematics and Computer Science Division, Argonne National Laboratory, Lemont, IL 60439}
    
    
    

    \maketitle

\begin{abstract}
 We propose a multistart algorithm to identify all local minima
 of a constrained, nonconvex stochastic optimization problem. The algorithm
 uniformly samples points in the domain and then starts a local stochastic
 optimization run from any point that is the ``probabilistically best'' point in
 its neighborhood.
 Under certain conditions, our algorithm is shown to asymptotically identify all 
 local optima with high probability; this holds even though our algorithm is shown to almost surely
 start only finitely many local stochastic optimization runs. 
 We demonstrate the performance of an implementation of our algorithm on nonconvex
 stochastic optimization problems, including identifying optimal variational
 parameters for the quantum approximate optimization algorithm. 
\end{abstract}

\section{Introduction} 
We consider the problem of 
identifying all local minima of
the constrained stochastic optimization problem
\begin{align}\label{eq:prob_def}\tag{SO} 
    \minimize_{x \in \mathbb{R}^d} f(x) := \E[\xi]{F(x,\xi)},~\text{subject to } x\in \cD, 
\end{align} 
where $F(x,\xi)$ is the observable random function, $\xi$ is a random variable
defined on the probability space $(\Omega, \mathbf{F},P_{\xi})$, and $\cD
\subset \R^d$ is compact. 

Identifying all local minima is relevant in various
engineering and scientific  applications. For instance, Gheribi et al.~\cite{Gheribi2011}
\Rev{describe a multicomponent chemical thermodynamics system where all low-melting
compositions---each corresponding to a local minimum of their objective---are
desired. While the system in~\cite{Gheribi2011} assumes that the evaluation of
temperatures is deterministic, in practice, such temperatures measurements
would be are accompanied by stochastic noise.} Furthermore, in biophysics and
biochemistry, an important problem is to understand the transitions in the shape of a
protein molecule as it folds itself from a disordered (high-energy) state to a
native (least-energy) state. The energy function has multiple local minima, and
each local minimum corresponds to an intermediate stable state of a protein molecule.
Identifying each of them can be crucial for understanding the protein folding
pathways~\cite{adamcik2018amyloid, floudas1999global}. \Rev{Li et al.~\citep{Zhenqin1987} highlight the
stochastic nature of the protein folding process.}
Moreover, as a direct consequence of identifying all local
minima, we obtain global minimizers of $f(x)$ as well.
Identifying such minima is important 
in various modern applications,
such as parameter tuning of \Rev{stochastic} event simulators~\cite{diSerafino2010,mohan21,li2017hyperband,maclaurin2015gradient} \Rev{and
identifying optimal parameters within a quantum approximate optimization algorithm (QAOA)~\cite{farhi2014quantum,
		farhi2014quantumbounded}. }

In this paper, we propose a
multistart algorithm for nonconvex stochastic optimization
(MANSO) to identify all the local minima of $f(x)$ in $\cD$ using state-of-the-art
local stochastic optimization methods.  
(\defref{ILM} formally defines the meaning of  
``identifying'' a local minimum.) It is designed to extend
the popular multistart algorithm
Multi-Level Single Linkage (MLSL)
\cite{RinnooyKan1987,RinnooyKan1987ii}, which guarantees to find all the 
local minima of a nonconvex deterministic function.
MANSO 
uniformly samples points in $\cD$ and starts a local optimization method, 
such as stochastic gradient descent, from points that are
considered the best points in their neighborhood. 
By design, MANSO seeks to use as few  
local optimization  runs as possible by carefully
shortlisting sampled points based on the noisy observations of $f(x)$. In particular,
like MLSL, we construct a specific sequence of radii $\{r_k\}$, where $k$ is the 
number of iterations of MANSO, which enables us to certify that a uniformly 
sampled point is indeed probabilistically best to start a local stochastic
optimization run.

We also provide theoretical guarantees on the performance of MANSO. Primarily, under
assumptions that the stationary points of \Rev{the} true objective function $f$ in $\cD$ are 
sufficiently separated and the local stochastic optimization method is guaranteed 
to converge to the first-order stationary points with high probability, we prove 
that MANSO identifies all local minima of $f$ on $\cD$ with high probability. 
Furthermore, we show that MANSO does so while starting only finitely many 
local stochastic optimization runs. 

In addition, we demonstrate the performance of MANSO on two benchmark nonconvex
stochastic optimization problems (up to 10 dimensions) using a derivative-free 
adaptive sampling trust-region stochastic optimization algorithm (ASTRO-DF)~\cite{Shashaani2018} 
as a local stochastic optimization method.

\paragraph{\textbf{Related Work}} In general, a nonconvex optimization problem is NP-hard
even in the deterministic case, where the true function $f(x)$ can be evaluated
at any point $x\in \cD$. 
In the past few decades, extensive
research has been conducted to develop \Rev{stochastic} algorithms for solving nonconvex
deterministic optimization problems, with the aim of finding global optima.
Such \Rev{stochastic} methods can be broadly categorized into
multistart~\cite{RinnooyKan1987,RinnooyKan1987ii,Larson2018} and Bayesian
optimization(BO)~\cite{Kushner1964,Frazier2018} methods. Also, some recent
works leverage BO techniques for exploring the domain in the multistart
framework~\cite{Mathesen2020}.
In contrast,
the parallel work for stochastic nonconvex optimization still requires
much attention. 

In the deterministic case, one of the popular approaches is to 
use a stochastic search technique where
randomly sampled points in a compact domain are explored with the  help of a
local search method and a specific set of rules. These rules are designed to 
qualify any new
randomly sampled point to start a local search, and they depend on the already 
observed values of the points
in the neighborhood of the point being tested. 
With the help of such rules, these types of algorithms
try to avoid finding the same local minima multiple times. Such types of algorithms
are known as \textit{multistart} algorithms~\cite{RinnooyKan1987,RinnooyKan1987ii,Locatelli1998}.
Among various multistart algorithms, one of the most popular is
MLSL~\cite{RinnooyKan1987,RinnooyKan1987ii}, which provides
asymptotic guarantees in identifying  all the local minima assuming  all stationary
points of the objective function are separated by a positive distance.
Locatelli~\cite{Locatelli1998} improves upon this assumption of separated stationary
points and introduces N-MLSL (non-monotonic MLSL). Parallel
implementation of MLSL is also discussed
in~\cite{Larson2018} with asymptotic guarantees to find all local
minima when the true function $f(x)$ can be concurrently evaluated at 
any $x\in \cD$, unlike MLSL. 
\Rev{Mathesen et al.~\cite{Mathesen2020} propose a novel approach, where the restart
	points are decided based on ideas developed in BO literature~\cite{Frazier2018,Nguyen2017,Wessing2017}. Instead of
	randomly selecting the restart points from the search domain, their method
	chooses points based on a surrogate Gaussian process model of
	the objective function. 
  Naturally, their surrogate model is updated sequentially using the
	observed function values at points that are already evaluated. A detailed
	review of such
	algorithms, including significant recent developments, can be found
	in~\cite{Frazier2018}. 
	Similarly, the methods in~\cite{Krityakierne2015,Peri2012AMG,Regis2012} consider 
	other approaches for utilizing a surrogate model within a multistart framework.
	Multistart methods
  that seek improved efficiency have considered early termination of local
		searches~\cite{Zilinskas19} and tunneling and evolutionary
    strategies~\cite{Zheng21}. We note that these methods are for deterministic
    objectives. Extending such approaches to the stochastic setting may yield
    similar improvements. } 

 Our algorithm extends the MLSL algorithm to the case 
when we have access only to noisy function values. Like MLSL, we also rely on 
state-of-the-art local  stochastic optimization techniques
with first- and second-order convergence
guarantees~\cite{jin2018local,Ghadimi2014,Ghadimi2015,Shashaani2018}. However, to the best 
of our knowledge, no work has developed a multistart algorithm for stochastic
function evaluations. 

Here is a brief roadmap of the paper. In the next section, we define notations
and definitions used in the paper, followed in~\secref{assumptions}
by the assumptions required to prove
the theoretical properties of MANSO.
In~\secref{algorithm} we describe the details of our proposed method MANSO,
and in~\secref{analysis}
we establish its theoretical properties.
In~\secref{QAOA} we 
describe the
parameters of the quantum approximate optimization algorithm, which we will use for testing the MANSO algorithm.
In \secref{numerical} we present our numerical results.
We conclude in \secref{concl} with a summary and brief description of further work.

\section{Notations and Definitions}\label{sec:notation}

We use capital calligraphic letters to denote Lebesgue measurable subsets of $\R^d$.
The volume (Lebesgue measure) of a set $\mathcal{A}$ is denoted as
$m(\mathcal{A})$. We use $\left|A \right|$ to denote the cardinality of a
discrete set $A$. Unless otherwise stated, $\left\| \cdot \right\|$ denotes the
Euclidean norm. We denote the set of natural numbers as $\N$. 

We use $\Var[\xi]{\cdot}$ and $\Cov[\xi]{\cdot,\cdot}$ to represent the variance
and covariance, respectively, with respect to the random variable $\xi$. 
If $\{A_n(\xi)\}_{n \in \N}$ is a sequence of events and 
$P_{\xi}(\{A_n(\xi)\})$ is the probability of event $n$ occurring, then 
this sequence of events occurs with high probability (w.h.p.)
if for any $\delta>0$ there exists an $n_0\in 
\N$ such that $P_{\xi}(A_n(\xi))>1-\delta$ for any $n\geq n_0$. 
\Rev{We index the probability measure of the random variable $\xi$ by $\xi$
itself just to differentiate it from the uniform sampling measure used subsequently
in the paper. Moreover, the samples space of the random variable $\xi$, 
$\Omega$ is arbitrary and the range of $\xi$ is an arbitrary Borel measurable set.} 
When a sequence
$\{a_t\}$ is $O(b_t)$, it implies that there exist a $K>0$ and $t_0\geq 1$ such
that $\forall t\geq t_0$, $a_t\leq K b_t$. We let $\hat f_n(\cdot)$ 
denote an estimate of $f(x)$ constructed using $n$ \iid~measurements of $F(x,\xi)$.

We now establish notation to be used to describe the MANSO algorithm.
Let $k\in \N$ denote the number of iterations of MANSO. Let $S_k \subseteq \cD$
be the collection of uniformly sampled points up to and including iteration $k$. Let
$A_k \subseteq S_k$ be the collection of sampled points from which a local
stochastic optimization method has been started and is still active at 
iteration $k$, and let $L_k$ denote the set of points generated from all the local
stochastic optimization runs up to and including iteration $k$.
Let $X^* \subset \cD$
denote the set of local minima of $f$ in $\cD$ and $\hat X^*_{k}$
denote
the set of local minima identified before iteration $k$ of
MANSO. 
Let $Y^*$ denote the set of stationary points of $f$ on $\cD$ that are not local minima.
We let $\{\mathbf{X}_i^a, i\in \N\}$ denote the random
sequence of iterates generated by a given local stochastic optimization method
started from $a\in \cD$. $\{\mathbf{X}_i^a, i\in \N\}$ is a stochastic process
defined on the probability space $(\Omega,\mathbf{F}^a,P_{\xi})$. Let $\{x_i,
i\in \N\}$ be its realization.
We represent the
filtration (information) available at iteration $i$ of the local stochastic optimization method started from $a\in \cD$
as $\{\mathbf{F}^a_i\}$, which is an increasing family of $\sigma$-algebras of
$\{\mathbf{F}^a\}$ on which the stochastic process $\{\mathbf{X}^a_i\}$ is defined. We also call
$\{\mathbf{F}_i^a\}$  a \textit{filtration} at iteration $i$ of the local
stochastic optimization (LSO) run started from $a\in \cD$.

We now list other important notation:
\begin{itemize}
  \item Let $r_k = \frac{1}{\sqrt{\pi}} \sqrt[d] {\fGamma(1+{d/2})m(\cD) \sigma
    \frac{\log|S_k|}{|S_k|}}$ for some fixed $\sigma>4$, where $\fGamma$ is the gamma function.
    The radius is used in~\lemref{RK2} to define the neighborhood of 
     a candidate ``probabilistically best'' point.
    \item Let $\partial \cD$ denote the boundary of the set $\cD$. 
    \item Let
    $\B{y}{r} := \{x \in \cD: \|x-y\|\leq r \}$ represent a ball of radius $r$
    centered at any $y\in \cD$.
    \item For $\tau > 0$, let 
    \begin{align}\label{eq:Q_def}
    \cQ_{\tau}:= \cup_{ y \in \partial \cD} \{ x \in \cD: \|x-y\| < \tau,  \}. 
    \end{align}
    be the points in $\cD$ within $\tau$ of the boundary.
    \item We denote $\eta$ as the minimum distance between any two distinct stationary points of $f$ on $\cD$,
    that is,
    \[
    \eta=\min_{\{x,y\}\in X^*\bigcup Y^*, x\neq y} \|x-y\|.
    \] 
\end{itemize}

Next, we define the \emph{domain of attraction for method $M$} for any local minima.
\begin{definition}[Domain of attraction] \label{def:doa}
    For any $x^* \in X^*$, the domain of attraction
    $\mathcal{L}_{x^*}$  is defined as a subset of $\cD$ such that if
    a local stochastic method $M$ is started from any point in it, then it will
    converge to the local minimum $x^*$ w.h.p. 
    Formally, for any $\e>0$,
    \begin{align}
        \mathcal{L}_{x^*}=  \bigcup \left\{{a\in \cD} : \lim_{k\to \infty}P_{\xi}(\|\mathbf{X}_k^a-x^*\|>\e) =0 \right\}.
    \end{align} 
\end{definition} 

Note that the domain of attraction is defined only for those points that
the local stochastic method $M$ will converge to with high probability.
Moreover, it is not required that the respective domain of attraction for each
$x^*\in X^*$ partition $\cD$. \Rev{We consider \defref{doa} to be a reasonable
stochastic extension of the domain of attraction for the deterministic case
considered in the original MLSL paper and its extensions (e.g., \cite[Theorem
4]{RinnooyKan1987}, \cite{Locatelli1998}, \cite[Assumption 2]{Larson2018}).} 

We also define the event \emph{$\omega-$identifying a local \Rev{minimum}}, which
we use throughout. In our algorithm, $\omega$ is a tuning parameter and \Rev{is} given as
input by the user. 
\begin{definition}[$\omega$-identifying local minima]\label{def:ILM}
  Let $\omega\leq \frac{\eta}{2}$ be fixed, and let $\{\mathbf{X}^a_i\}$ be the sequence of
  iterates generated by the local method $M$ started from the point $a \in \LL_{x^*}$.
  A local method $M$ has identified the local minimum $x^*$ when the
  event $\left\{ \|\mathbf{X}^a_i - x^*\|<\omega  \right\} $ occurs for all $i \geq i_0$.
\end{definition} 

\section{Assumptions}\label{sec:assumptions}
We now state the assumptions needed for our theoretical analysis.
We group them into
assumptions about the objective and domain of the problem \eqref{eq:prob_def},
assumptions about the true function $f$ and its estimate computed by using
$F(x,\xi)$ at $x\in \cD$,
and assumptions about the
local stochastic optimization method used within the multistart framework.

\begin{assumption}\label{assm:prob}
  We first impose the following conditions on the problem \eqref{eq:prob_def}.
  \begin{enumerate}
    \item $\cD$ is a compact set, and $f$ is twice continuously differentiable on $\cD$. \label{sub:c2}
    \item The minimum distance between any two distinct stationary points is
      positive, that is, $\eta>0$.
          \label{sub:min_spacing}
    \item There exists $\tau>0$ such that $(X^* \bigcup Y^*) \bigcap \cQ_{\tau}
      = \emptyset$, for $\cQ_\tau$ defined in \eqref{eq:Q_def}.
      \label{sub:min_boundary}  
  \end{enumerate}
\end{assumption} 
Because the set $\cD$ is compact by \subassref{prob}{c2},
\subassref{prob}{min_spacing} implies that $f$ has finitely many stationary
points in $\cD$ and that $f$ is not flat in $\mathcal{D}$. 
\subassref{prob}{min_boundary} ensures \Rev{that there are no stationary points near the
boundary of $\cD$. (The parts of \assref{prob} are the same as those considered in the deterministic case~\cite{RinnooyKan1987}.) It is useful to have notation for
the set of points in $\cD$ within $\omega$ of a stationary point of $f$:}
For any $\omega \in (0,\eta)$, 
let 
\begin{align}\label{eq:T_def}
\cT_{\omega}:= \cup_{x\in \{X^*\cup Y^* \}} \B{x}{\omega}
\end{align}

\begin{assumption}\label{assm:Fhat} 
  For any $x\in \cD$, we assume that the estimate $\hat f_n(x)$ of
  $f(x)$, which is
  constructed by using $n$ \iid~measurements
  of the measurable function $F(x,\xi)$, satisfies
  \begin{enumerate}
    \item $\E[\xi]{\hat f_n(x)}= f(x)$ 
    \item $\Var[\xi]{\hat f_n(x)} < \infty$.
    \end{enumerate}
\end{assumption} 
Since $\hat f_n(\cdot)$ is constructed by using $n$ \iid~measurements
of the measurable 
function $F(\cdot,\xi)$, this assumption requires $F(\cdot,\xi)$ to satisfy
some regularity conditions. 
For instance, $\hat f_n(\cdot) =
\frac{1}{n}\sum_{i=1}^{n}F(\cdot,\xi_i)$ is an unbiased estimate of $f$ and $\Var[\xi]{\hat f_n(x)}<\infty$ if $\Var[\xi]{F(x,\xi_i)}<\infty$.

\begin{assumption}\label{assm:LSO} 
  We make the following assumptions about the local stochastic optimization method $M$.
  If $\{\mathbf{X}^a_i\}$ is a random sequence of iterates produced by $M$
  when started from $a \in \mathcal{L}_{x^*}$, then it 
  satisfies the following.
\begin{enumerate}
\item For any $x^* \in X^*$, $m(\mathcal{L}_{x^*})>0$ and $m(\mathcal{L}_{x^*}
  \setminus \cQ_{\tau}) > 0$  \label{sub:measures}
\item For any two local minima $\{x^*,y^*\}\in X^*$,  
    $\mathcal{L}_{x^*} \cap \mathcal{L}_{y^*} = 
    \emptyset$.\label{sub:overlap}
  \item  For any $\nu >0$ there exist an $i_0 \in \mathbb{N}$, $\omega \in
    (0,\frac{\eta}{2})$, and a sequence $\{\Lambda_i\}$ satisfying  $\Lambda_i
    \in (0,1)$ and $\underset{i \to \infty}{\lim} \Lambda_i = 0$ such that \label{sub:LSO_nu}
 \begin{enumerate}[label=(\roman*)]
   \item $P_{\xi}\{ \|\nabla f(\mathbf{X}_i^a)\|^2<\nu | a  \in  \mathcal{L}_{x^*} \} \geq 1 - \Lambda_i$  and  
   \item $P_{\xi}\left\{ \|\mathbf{X}_i^a - x^*\|<\omega ~\big| \|\nabla f(\mathbf{X}_i^a)\|^2<\nu, a  \in  \mathcal{L}_{x^*} \right\} = 1,$
 \end{enumerate}
    for all $i \geq i_0$.
\item For $x, y \in X^*$ and $x\neq y$, we also assume that the sequence of iterates 
  $\{\mathbf{X}^a_i\}$ and $\{\mathbf{Y}^b_j\}$ generated by LSOs started at $a \in
  \mathcal{L}_{x} $ and $b \in \mathcal{L}_{y}$, respectively.  Then \label{sub:LSO_omega}
\[ P_{\xi}\left\{  \min_{\forall \{i \geq 1\}} \|\mathbf{Y}^b_j- \mathbf{X}^a_i \|>\omega \Big| 
 \mathbf{F}^b_{j-1} , \mathbf{F}^a_{\infty} \right\}=1 \text{ for all } j \in \N .\]
\end{enumerate} 
\end{assumption} 

Some of the conditions on the local method $M$ in~\assref{LSO} are strong
conditions that may be difficult to satisfy in practice by a local stochastic
optimization method on a nonconvex problem.  \Rev{Yet, we find these
assumptions to be a natural stochastic version of their deterministic
counterparts: for Assumption 3.3 above is similar to the strictly decent property assumed for the (deterministic) local optimization method~\cite{RinnooyKan1987}.}
\subassref{LSO}{measures} ensures that the domain of attraction for any local minima 
has positive measure and that $\mathcal{L}_{x^*}$ does not
lie entirely in the boundary set $\cQ_{\tau}$. 
\subassref{LSO}{overlap} ensures that no two local minima have overlapping
domains of attraction. 
In general, local
stochastic
methods \cite{Ghadimi2014,Ghadimi2015} guarantee convergence to a
stationary point only w.h.p.; that is, they satisfy \subassref{LSO}{LSO_nu}(i).
In addition, we need \subassref{LSO}{LSO_nu}.1(ii) to ensure that
$x^*$ is within an $\omega$-ball of all iterates after a large number of iterations with
probability 1, given that the local method is started in that $\mathcal{L}_{x^*}$ and
the norm of the gradient at the last iterate (of an LSO run
) is small enough (less than $\nu$).
Furthermore, in \subassref{LSO}{LSO_omega},
we assume that the iterates (realizations of \Rev{the} sequence of iterates) generated by
any two LSO runs started in different domains of attractions are at least
$\omega$ apart. These conditions are necessary for developing 
the algorithm and for showing that the algorithm MANSO identifies all the local
minima
w.h.p.~using the LSO method $M$.

\section{Statement of the Algorithm}~\label{sec:algorithm}
\tabref{CON} lists the conditions that our algorithm checks when deciding where
to start an LSO. \algref{aposmm-s} states our algorithm for
$\omega$-identifying all local minima of $f(x)$.

\begin{table}
    \centering
    \caption{Conditions defined by tolerances $r_k$, $\tau>0$, $\omega>0$,
    $n\in \mathbb{N}$, and $\beta\in(0,1/2)$ to be checked by~\algref{aposmm-s}
    before starting a local optimization run. \label{tab:CON}}
    \begin{tabular}{@{}l@{}}
        \textbf{S1} $\nexists$ a point $z \in \B{a}{r_k} \cap (S_k)$,
        such that 
        $ P_{\xi} \left(\hat f_{n}(z)- \hat f_{n}(a) > \sqrt{\frac{\Var[\xi]
                {\hat f_{n}(z)-\hat f_{n}(a) }}{\beta}} \right) \leq \beta$,\\
        \hspace{2em} where $n$ is the  
        number of random samples
        of $f(\cdot)$ at respective points.
        \\
        \textbf{S2} $a \notin \cup_{x\in \hat X_{k-1}^*} \B{x}{\omega}$, for a given
        $\omega < \frac{\eta}{2}$, where $\hat X^*_{k-1}$
        is the collection of 
         \\
         \hspace{2em}  approximate local minima up to iteration $(k-1)$. 
         \\
        \textbf{S3} $a \notin \cQ_{\tau}$, that is, near the boundary of set
         $\cD$.   
         \\
        \textbf{S4} $a$ has not started any LSO.
         \\
        \bottomrule
\end{tabular}
\end{table}

\begin{algorithm}[H]
  \textbf{Input:} LSO method $M$; $\beta \in (0,\frac{1}{2}) $; $\tau>0$; $\omega>0$; 
  sampling effort $n\in \N$.  \\
  
  Initialize $S_0 = A_0 = \hat X_0^*= L_0 = \{\},$ 
  
  \For{$k=0,1,\ldots$}{
  Uniformly sample a point  $a$ in $\cD$, evaluate $\hat{f}_{n}(a)$ 
  and add it to $S_k$.

    Start LSO method $M$ from all points in $S_k$ satisfying the conditions listed
     in~\tabref{CON} for a given $r_k$, $\beta$, $\sigma$, $\omega$, and sampling 
     effort $n, \forall x \in S_k$. Add those points to $A_k$.
   
    Update $L_k$ by adding the next iterate generated by each LSO started from 
    all the points in $A_k$. \label{al:1step}
  
    Terminate any LSO run if its current iterate
        is within $2\omega$ distance of any point in $L_{k-1}$ (from other LSOs) 
        and remove that LSO run from $A_k$.\label{al:term}
  
   Update $\hat{X}_k^*$ by adding any new local minima identified during the 
   local searches, and remove that LSO run from $A_k$.
  
  Set $S_{k+1} = S_k$, $ \hat{X}_{k+1}^* = \hat{X}_k^* $, $L_{k+1}= L_k$, and
  $A_{k+1}= A_k$.
}
  \caption{MANSO\label{alg:aposmm-s}} 
\end{algorithm}
Notice that in condition \textbf{S1} of~\tabref{CON}, we have used $n \in \N$ as the number of random 
samples of $f(\cdot)$ at respective points. We
show in \thmref{finite} later that for any $n \in \N$ the total number of LSO
runs started by~\algref{aposmm-s} is finite.  Since the results on the
diffusion approximation of nonconvex SGD~\cite{Hu2019} show that the use of
smaller batch sizes in batch SGD methods help escape sharp local minima
and nondegenerate saddle points,  we anticipate that the use of a larger $n$
would be ``better and faster'' in $\omega$---identifying all the local minima. 

Note that MANSO can be viewed as a stochastic analogue of MLSL. That is, if we
further assume that (1) $\hat f_n(x) \to f(x)~ P_{\xi}-a.s.$ as $n \to
\infty$ and (2) $\Var[\xi]{\hat f_n(x)}\to 0$ as $n \to \infty$ , and for any two
points $\{x,y\}\in \cD $, $\Cov[\xi]{\hat f_n(x),\hat f_n(y)} \to 0$ as $n \to
\infty$, then the above notion (see~\tabref{CON}, \textbf{S1}) of not finding a
``probabilistically best'' point $z$ in its $r_k-$neighborhood converges to the
original MLSL~\cite{RinnooyKan1987,RinnooyKan1987ii} condition of finding a
``better'' point $z$, as $n \to \infty$. Also, while implementing MANSO we
estimate $\Var[\xi] {\hat f_{n}(x)-\hat f_{n}(y) }$ using samples of
$f(\cdot)$ for any $\{x,y\}\in \cD$.  In step~\ref{al:term}
of~\algref{aposmm-s}, since we do not have prior knowledge about the minimum
separation between stationary points, $\eta$, we  must choose a small enough
positive  value for each $\omega$ and $\tau$.

\section{MANSO Asymptotic Analysis}~\label{sec:analysis}

MANSO seeks to use as few 
LSO runs  
as possible to $\omega$-identify all 
local minima of $f$ in $\cD$. 
We will show that the total number of LSO runs started by MANSO is finite,
even if the algorithm is run forever. 
\Rev{We state our main theoretical results in this section, but to ease presentation, 
the proofs of lemmas are deferred to the appendix.

We first show a limiting result for the measure of balls} around any point $a \in \cD \setminus 
(\cQ_{\tau}\cup \cT_{\omega})$,
where $\cQ_\tau$ and $\cT_{\omega}$ are defined in \eqref{eq:Q_def} and
\eqref{eq:T_def}, respectively. 

\begin{lemma}\label{lem:RK} 
  Under~\subassref{prob}{c2} and~\ref{assm:Fhat}, for any $a \in \cD \setminus 
  (\cQ_{\tau}\cup \cT_{\omega}) $, $\beta\in (0,1/2)$, and
  for all $n\geq 1$
   \[ \lim_{r
  \to 0} \frac{m(\A{a}{r}{n}{\beta})}{m ( \B{a}{r} )} \geq \frac{1}{2}, \]
where $$\A{a}{r}{n}{\beta} := \left\{x\in \cD: \| x-
a\|\leq r \text{ and } P_{\xi} \left(\hat f_n(x) - \hat f_n(a) > \e_n(x;a) \right)
\leq \beta \right\},$$ and $\e_n(x;a)=\sqrt{\frac{\Var[\xi]{\hat f_n(x)
            -\hat f_n(a) }}{\beta}}$. 
\end{lemma}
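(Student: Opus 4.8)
The plan is to sandwich $\A{a}{r}{n}{\beta}$ from below by the explicit ``descent set''
\[ D_r := \{ x \in \cD : \|x - a\| \le r,\ f(x) \le f(a)\}, \]
and to observe that $D_r$ fills asymptotically half of $\B{a}{r}$ as $r \to 0$. I would first record two elementary consequences of $a \in \cD \setminus (\cQ_\tau \cup \cT_\omega)$. Since $a \notin \cQ_\tau$ we have $\mathrm{dist}(a,\partial\cD) \ge \tau$, so for every $r < \tau$ the set $\B{a}{r}$ is the full Euclidean ball of radius $r$, with $m(\B{a}{r}) = \pi^{d/2} r^d / \fGamma(1 + d/2)$ and no boundary clipping. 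Since $a \notin \cT_\omega$ and $\cT_\omega$ contains every stationary point of $f$, we have $g := \nabla f(a) \ne 0$.

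The first step is the inclusion $D_r \subseteq \A{a}{r}{n}{\beta}$, valid for every $r$ and $n \ge 1$. Fix $x$ with $\|x-a\| \le r$ and $f(x) \le f(a)$ and put $Z := \hat f_n(x) - \hat f_n(a)$. By \assref{Fhat}, $\E[\xi]{Z} = f(x) - f(a) \le 0$ and $\Var[\xi]{Z} < \infty$ (finiteness using \assref{Fhat} at $x$ and at $a$ plus the Cauchy--Schwarz bound on the covariance), so $\e_n(x;a) = \sqrt{\Var[\xi]{Z}/\beta}$ is well defined; when $\Var[\xi]{Z} = 0$ one has $Z \le 0$ a.s. and $P_\xi(Z > 0) = 0 \le \beta$. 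Otherwise, using $-\E[\xi]{Z} \ge 0$ and Chebyshev's inequality,
\[ P_\xi\!\left(Z > \e_n(x;a)\right) \le P_\xi\!\left(Z - \E[\xi]{Z} > \e_n(x;a)\right) \le P_\xi\!\left(\left|Z - \E[\xi]{Z}\right| \ge \e_n(x;a)\right) \le \frac{\Var[\xi]{Z}}{\e_n(x;a)^2} = \beta , \]
so $x \in \A{a}{r}{n}{\beta}$. The point is that the calibration $\e_n(x;a)^2 = \Var[\xi]{Z}/\beta$ makes the final quantity exactly $\beta$; this step needs no continuity of the estimator, only \assref{Fhat}.

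The remaining step, and the one I expect to need the most care, is the geometric claim $m(D_r)/m(\B{a}{r}) \to \tfrac12$ for a function with nonzero gradient at $a$. Here I would fix $\epsilon \in (0,\|g\|)$, use differentiability of $f$ at $a$ (\subassref{prob}{c2}) to get $\delta_\epsilon > 0$ with $\left|f(x) - f(a) - g^\top(x-a)\right| \le \epsilon\|x-a\|$ for $\|x-a\| \le \delta_\epsilon$, and conclude that for $r \le \min(\tau,\delta_\epsilon)$ the closed cone slice $C^r_\epsilon := \{x : \|x-a\| \le r,\ g^\top(x-a) \le -\epsilon\|x-a\|\}$ is contained in $D_r \subseteq \B{a}{r}$. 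Since $C^r_\epsilon$ is the intersection of $\B{a}{r}$ with a circular cone about $-g$ of half-angle $\arccos(\epsilon/\|g\|)$, the ratio $m(C^r_\epsilon)/m(\B{a}{r})$ is a constant $c(\epsilon)$ independent of $r$ (by scaling), and $c(\epsilon) \to \tfrac12$ as $\epsilon \downarrow 0$ because the cone exhausts the half-space $\{x:g^\top(x-a)\le 0\}$. Then
\[ \liminf_{r \to 0} \frac{m(\A{a}{r}{n}{\beta})}{m(\B{a}{r})} \;\ge\; \liminf_{r \to 0} \frac{m(D_r)}{m(\B{a}{r})} \;\ge\; \lim_{r\to 0}\frac{m(C^r_\epsilon)}{m(\B{a}{r})} \;=\; c(\epsilon) , \]
and sending $\epsilon \downarrow 0$ gives the assertion. (A matching upper bound $m(D_r)/m(\B{a}{r}) \le 1 - c(\epsilon)$ from $D_r \subseteq \{x:\|x-a\|\le r,\ g^\top(x-a)\le\epsilon\|x-a\|\}$ shows $m(D_r)/m(\B{a}{r})$ actually converges to $\tfrac12$, although only the lower bound is needed for $\A$.) The subtlety in this last step is purely bookkeeping with the double limit: one must fix the cone aperture $\epsilon$, let $r\to 0$, and only afterwards let $\epsilon \to 0$, since the Taylor radius $\delta_\epsilon$ shrinks with $\epsilon$.
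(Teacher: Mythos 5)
Your proof is correct, and it follows the same overall skeleton as the paper's --- sandwich $\A{a}{r}{n}{\beta}$ from below by a descent region around $a$ via Chebyshev, then show the linearized descent region fills half the ball as $r\to 0$ --- but both steps are executed differently. For the probabilistic step, you apply Chebyshev directly on the set $\{f(x)\le f(a)\}$, using $\E[\xi]{\hat f_n(x)-\hat f_n(a)}=f(x)-f(a)\le 0$ so that the deviation bound is exactly $\Var[\xi]{\hat f_n(x)-\hat f_n(a)}/\e_n(x;a)^2=\beta$ (with the degenerate zero-variance case handled separately); the paper instead introduces the intermediate set $\cG(a;r;n)$ where $f(x)-f(a)<\e_n(x;a)$ and bounds the probability by $\Var[\xi]{\hat f_n(x)-\hat f_n(a)}/(\e_n(x;a)-[f(x)-f(a)])^2$ before restricting to $\cC(a;r)$ --- your version is shorter and avoids the shifted denominator. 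For the geometric step, the paper uses the second-order set $\cE(a;r;\rho)=\{\|x-a\|\le r,\ \nabla f(a)^T(x-a)+\tfrac12\rho r^2\le 0\}$ with $\rho$ a uniform bound on the Hessian, and imports the volume estimate from Lemma 7 of Rinnooy Kan and Timmer, yielding the explicit deficiency of order $\rho r/p$ with $p=\min\{\|\nabla f(a)\|: a\in\cD\setminus(\cQ_\tau\cup\cT_\omega)\}$; you instead use only differentiability at $a$ (guaranteed since $a\notin\cT_\omega$ forces $\nabla f(a)\neq 0$) and a scaling-invariant cone of aperture controlled by $\epsilon$, with the double limit taken in the right order ($r\to 0$ first, then $\epsilon\downarrow 0$). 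What each buys: the paper's route gives a quantitative, uniform-in-$a$ rate that mirrors the original MLSL analysis it extends, while yours is self-contained, needs weaker smoothness than \subassref{prob}{c2}, and avoids the external citation; note that, like the paper, you really establish a $\liminf$ bound, which is how the ``$\lim$'' in the statement should be read.
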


Specifically, \lemref{RK} derives a bound on the measure of the set of points that are
not probabilistically best in a ball of radius $r$ around a point $a$ drawn
uniformly from the set of points in $\cD$ not within $\omega$ of a stationary
point or $\tau$ of the boundary of $\cD$. (Not being probabilistically best is
measured with respect to $\beta$, an integer $n$, and the tolerance
$\e_n(x;a)$.) \lemref{RK} shows in the limit as $r$ converges to zero that the set of 
not probabilistically best points has \Rev{a} measure of at least half of the ball
around $a$. Note that it is true for any $n\in \mathbb{N}$.

Next, we use \lemref{RK} and  construct a specific sequence of
radius $\{r_k\}$ to show that
 the probability of starting an LSO run from any previously sampled point is bounded by a 
 term that converges to zero as the number of iterations increases. The proof of the following
  result uses arguments similar to those used in \cite[Theorem 8]{RinnooyKan1987}.   
\begin{lemma}\label{lem:RK2}
    Let $t_k'$ be the number of LSO runs started from any point in $ S_k\setminus
    (\cQ_{\tau}\cup \cT_{\omega})$ during iteration $k$ of \algref{aposmm-s}. 
    Then under~\subassref{prob}{c2} and ~\ref{assm:Fhat}
    and for $r_k$ as defined in \secref{notation} and used in  \tabref{CON} with $\sigma > 0$, 
    \[ P[\{ t_k' > 0\}] = O(|S_k|^{1- \frac{\sigma}{2} }).\]
    \end{lemma}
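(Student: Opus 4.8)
The plan is to adapt the argument of \cite[Theorem~8]{RinnooyKan1987}, replacing its deterministic ``there is a strictly better sampled point nearby'' event by the ``there is a probabilistically best sampled point nearby'' event encoded in condition \textbf{S1} of \tabref{CON}. First I would note that $t_k'>0$ forces some uniformly sampled point $a\in S_k\setminus(\cQ_\tau\cup\cT_\omega)$ to satisfy \textbf{S1} at iteration $k$ (such an $a$ automatically satisfies \textbf{S3}, and dropping \textbf{S2} and \textbf{S4} only enlarges this event), and that, by the definition of the set $\A{a}{r}{n}{\beta}$ in \lemref{RK}, ``$a$ satisfies \textbf{S1} at iteration $k$'' is precisely the statement that no other point of $S_k$ lies in $\A{a}{r_k}{n}{\beta}$. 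A point worth stressing: $\A{a}{r_k}{n}{\beta}$ is a \emph{deterministic} set once $a,r_k,n,\beta$ are fixed, because it is defined through the probability $P_\xi$ over the $n$ \iid~measurements, not through their realizations; hence once we condition on the locations of the sampled points there is no residual randomness to control.

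Next, since $N:=|S_k|=k+1$ is deterministic and $a_1,\dots,a_N\stackrel{\text{iid}}{\sim}\mathrm{Unif}(\cD)$, a union bound followed by conditioning on $a_i$ and independence of the remaining $N-1$ points gives
\[
P[\{t_k'>0\}]\le\sum_{i=1}^{N}P\bigl(a_i\notin\cQ_\tau\cup\cT_\omega,\ a_j\notin\A{a_i}{r_k}{n}{\beta}\ \forall j\ne i\bigr)\le N\sup_{a\notin\cQ_\tau\cup\cT_\omega}\Bigl(1-\tfrac{m(\A{a}{r_k}{n}{\beta})}{m(\cD)}\Bigr)^{N-1}.
\]
I would then lower bound $m(\A{a}{r_k}{n}{\beta})/m(\cD)$ uniformly in $a$ by reusing the \emph{non-asymptotic} estimate inside the proof of \lemref{RK}: the inclusion $\cE(a;r;\rho)\subseteq\A{a}{r}{n}{\beta}$ together with Lemma~7 of \cite{RinnooyKan1987} give $m(\A{a}{r}{n}{\beta})\ge(\tfrac12-Cr)\,m(\B{a}{r})$ for a fixed constant $C$ (built from $\rho$, $p=\min\{\|\nabla f(a)\|:a\in\cD\setminus(\cQ_\tau\cup\cT_\omega)\}>0$, and $d$). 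Since $r_k\to0$, for all large $k$ we have $r_k\le\tau$, so for $a\notin\cQ_\tau$ the $\cD$-truncation in the definition of $\B{a}{r}$ is inert (a short boundary-crossing argument using \subassref{prob}{min_boundary}), whence $m(\B{a}{r_k})=\frac{\pi^{d/2}}{\fGamma(1+d/2)}r_k^d=\sigma\,m(\cD)\,\frac{\log N}{N}$ by the choice of $r_k$. Therefore $\frac{m(\A{a}{r_k}{n}{\beta})}{m(\cD)}\ge(\tfrac12-Cr_k)\,\sigma\,\frac{\log N}{N}$, uniformly in $a$.

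Substituting this bound and using $(1-x)^{N-1}\le e^{-(N-1)x}$ yields, for all large $k$,
\[
P[\{t_k'>0\}]\le N\exp\!\Bigl(-(N-1)(\tfrac12-Cr_k)\sigma\tfrac{\log N}{N}\Bigr)=N^{\,1-\frac{\sigma}{2}}\cdot N^{\,\sigma Cr_k+\frac{\sigma}{2N}-\frac{\sigma Cr_k}{N}}.
\]
It remains to see that the second factor is $O(1)$, i.e.\ that $\bigl(\sigma Cr_k+\tfrac{\sigma}{2N}\bigr)\log N\to0$: since $r_k=\Theta\bigl((\log N/N)^{1/d}\bigr)$ we get $r_k\log N=\Theta\bigl((\log N)^{1+1/d}N^{-1/d}\bigr)\to0$, and $\tfrac{\log N}{N}\to0$, so $N^{\,\sigma Cr_k+\frac{\sigma}{2N}-\frac{\sigma Cr_k}{N}}\to1$. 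Consequently $P[\{t_k'>0\}]=O(N^{1-\sigma/2})=O(|S_k|^{1-\sigma/2})$.

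The main — essentially the only nontrivial — point is the bookkeeping in this last step: landing on the clean exponent $1-\sigma/2$ rather than a diffuse $1-\sigma/2+o(1)$ requires noticing that the $r_k$-correction in the exponent, while it vanishes, must vanish fast enough that $N$ raised to it stays bounded, and that is exactly where the $\log N/N$ scaling engineered into $r_k$ is needed (a polylogarithmic factor must dominate the $N^{1/d}$ coming from $r_k$). Everything else is either a direct consequence of \lemref{RK} (the per-ball measure bound) or the two structural remarks above — that $\A{a}{r_k}{n}{\beta}$ carries no randomness once the sample locations are fixed, and that $\B{a}{r_k}$ has the full Euclidean-ball volume for $a\notin\cQ_\tau$ when $r_k\le\tau$.
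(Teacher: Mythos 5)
Your proof is correct and takes essentially the same route as the paper's: a Boole/union bound over the sampled points reducing $\{t_k'>0\}$ to condition \textbf{S1}, the measure lower bound on $\A{a}{r_k}{n}{\beta}$ from \lemref{RK}, the specific choice of $r_k$, and the inequality $1-x\le e^{-x}$ to extract the exponent $1-\sigma/2$. The only difference is that you carry the non-asymptotic $\bigl(\tfrac12-Cr_k\bigr)$ correction from inside the proof of \lemref{RK} and check that $N^{\sigma C r_k}\to 1$ via $r_k\log N\to 0$, whereas the paper inserts the limiting constant $\tfrac12$ directly; your bookkeeping is a slightly more careful rendering of that same step and lands on the same bound.
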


Subsequently, we use the summability of the bound obtained in~\lemref{RK2}
on the probability of starting any LSO run from the set of sampled points
not within $\omega$ of any stationary point or $\tau$ of the boundary of $\cD$ to show
in~\thmref{finite} that the total number of LSO runs started
by~\algref{aposmm-s} is finite. Furthermore, condition \textbf{S3} in~\tabref{CON}
ensures that LSO does not start from any point in $\cQ_{\tau}$, 
and step~\ref{al:term}~in~\algref{aposmm-s} 
ensures that the total number of LSO runs started from
points sampled within the $\omega$-ball of any stationary point is finite  as
$k \to \infty$. (The total number of stationary points is finite because of
~\subassref{prob}{min_spacing}.)

That is, the number of LSO runs started by MANSO is finite even if MANSO runs forever.
\begin{theorem}\label{thm:finite}
  Let $t_k$ be the number of LSO runs started by MANSO in iteration $k$. Then under~\assref{prob},
   $\sum_{k=1}^{\infty}t_k < \infty$ with probability 1. 
\end{theorem}

\begin{proof}
	For any $\sigma>4$ and $|S_k|= O(k)$, \lemref{RK2} implies that
	\begin{align} \label{eq:fin0}
		\sum_{k=1}^{\infty} P[ \{t_k' > 0\} ] < \infty,
	\end{align}
	where $t_k'$ is the number of LSO runs started from points in $a\in S_k\setminus
	(\cQ_{\tau}\cup \cT_{\omega})$ during iteration $k$ of \algref{aposmm-s}. 
	Note that using the first Borel--Cantelli lemma \cite[Theorem 2.3.1]{durrett2010probability}, 
	we have from equation~\eqref{eq:fin0} that
	\[
	P\left(\cap_{i\geq 1} \cup_{k\geq i } \{ t_k' >0 \}\right) = 0. 
	\] 
	This is equivalent to 
	\begin{align}
		P\left( \exists i\geq 1: \forall \ k \geq i, \{t_k' \leq 0 \}\right) = 1.
		\label{eq:BC2}
	\end{align}
	Since $t_k'\geq 0, \forall k\geq 1$, the result in~\eqref{eq:BC2} implies
	that $\lim_{k \to \infty} t_k' \to 0~ P-$ almost surely, that is, with
	P-probability 1. Since $\{t_k'\}$ is a sequence of natural numbers, it
	implies that 
	\begin{align}
		\sum_{k=1}^{\infty}t_k' < \infty. 
		\label{eq:BC3}
	\end{align}
	
	Furthermore, if a point in $S_k$ belongs to $\cQ_{\tau}$, then MANSO (see \textbf{S3}
	in \tabref{CON}) does not start a run at that point, since we assumed
	in~\subassref{prob}{min_boundary} that no minimum lies in $\cQ_{\tau}$. 
	Therefore, the probability of
	starting an LSO run from any point $a\in S_k \cap \cQ_{\tau}$ is zero, and the
	result in~\eqref{eq:BC3} holds true  for all $a \in ( S_k \setminus
	(\cQ_{\tau} \cup \cT_{\omega}) )\cup \cQ_{\tau}  $, that is, for all $a\in S_k
	\setminus \cT_{\omega}$.
	
	Now consider the last case when $a\in S_k \cap \cT_{\omega}$. Let $t_k''$ be
	the number of LSO runs started by MANSO in the iteration $k$ from any
	point $a\in S_k \cap \cT_{\omega}$. Since in step~\ref{al:term}~of~\algref{aposmm-s} we 
	kill the LSO run if its current iterate is within $2\omega$ distance of an 
	already generated iterate from any of the previous LSO, 
	a run can be started from any $a\in S_k \cap \cT_{\omega}$ at most once. Since 
	\subassref{prob}{min_spacing} implies that there are only a finite number of 
	local minima, there exists a $k_0 \geq 1$ such  that for all $k\geq k_0$ 
	the number of LSO runs started
	at points $a\in S_k \cap \cT_{\omega}$ will be zero. 
	Therefore
	the number of LSO runs $t_k'' \to 0$ as the
	number of samples increases to infinity. 
	Since $\{t_k''\}$ is a sequence of
	natural numbers, it implies that 
	\begin{align}
		\sum_{k=1}^{\infty}t_k'' < \infty. 
		\label{eq:BC4}
	\end{align}
	Since $t_k= t_k' + t_k''$, the result follows immediately by adding~\eqref{eq:BC3} 
	and~\eqref{eq:BC4}.
	\flushright \qed
\end{proof}

Our next goal is to show that under certain assumptions MANSO will
$\omega$-identify all the local minima w.h.p. Recall that $X^*$ is the collection of
local minima of $f$ in $\cD$. 
The~\subassref{LSO}{LSO_nu}  guarantees that if MANSO starts an LSO run from any point in the
domain of attraction $\mathcal{L}_{x^*}$ of a local minimum $x^* \in X^*$, then the 
LSO method $M$ identifies it w.h.p. 
In particular, we prove that if a point is sampled in the
domain of attraction of a local minimum, then the  probability of  that local
minimum not being identified is sufficiently small for large enough iterations 
of both MANSO and LSO. 
Combined with the fact that these domains of attraction are of
positive measure and the probability of getting a uniformly sampled point in
any domain of attraction approaches 1 as the number of iteration increases
(since the number of sampled points increase with each iteration), we show
that all the local minima are identified w.h.p.
\begin{theorem}\label{thm:ID}
    Under~\assref{LSO}, and given the sequence of radii $\{r_k\}$ as constructed
    in~\lemref{RK2} and used in condition \textbf{S1} of \tabref{CON}, MANSO identifies all
    the local minima of \eqref{eq:prob_def} w.h.p.
\end{theorem}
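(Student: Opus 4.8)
The plan is to fix an arbitrary $x^*\in X^*$ and prove that $P_\xi(x^*\text{ is not }\omega\text{-identified by iteration }k)\to 0$ as $k\to\infty$; since \subassref{prob}{min_spacing} makes $X^*$ finite, a union bound over $X^*$ then gives the theorem. \textbf{Step 1: a run enters $\mathcal{L}_{x^*}$.} By \subassref{LSO}{measures} the constant $c:=m(\mathcal{L}_{x^*}\setminus\cQ_\tau)/m(\cD)$ is positive, so, since the points of $S_k$ are \iid uniform on $\cD$ and $|S_k|\to\infty$, the probability that no sample has landed in $\mathcal{L}_{x^*}\setminus\cQ_\tau$ by iteration $k-i_0$ is $(1-c)^{|S_{k-i_0}|}\to 0$. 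On the complementary event some sampled $a\in\mathcal{L}_{x^*}\setminus\cQ_\tau$ is available as a start: \textbf{S3} holds there automatically; \textbf{S4} can fail only if a run was already launched from $a$, which is just as good; \textbf{S2} can fail only if $x^*$ is already identified --- in which case we are done --- or if $a\in\B{y^*}{\omega}$ for an already-identified $y^*$, which for large $|S_k|$ I would avoid by working inside the still positive-measure set $(\mathcal{L}_{x^*}\setminus\cQ_\tau)\setminus\bigcup_{y^*\in X^*}\B{y^*}{\omega}$; and \textbf{S1} is the MLSL-type condition, treated by an argument parallel to the proof of \lemref{RK2} --- since $f$ has a local minimum at $x^*$, the sampled point nearest $x^*$ gets arbitrarily close to $x^*$ and, for $r_k$ small, is probabilistically best inside its own $\B{\cdot}{r_k}$, hence clears \textbf{S1}, and such a point lies in $\mathcal{L}_{x^*}$ with probability tending to one since $r_k\to 0$. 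Thus, with probability tending to one, an LSO run is (or has been) started from a point of $\mathcal{L}_{x^*}\setminus\cQ_\tau$ no later than iteration $k-i_0$.

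\textbf{Step 2: the run $\omega$-identifies $x^*$.} Once an LSO run is active from $a\in\mathcal{L}_{x^*}$, \subassref{LSO}{LSO_nu} supplies, for the chosen $\nu$, an index $i_0$ and a sequence $\Lambda_i\in(0,1)$ with $\Lambda_i\to 0$ so that, combining parts (i) and (ii), $P_\xi(\|\mathbf{X}^a_i-x^*\|<\omega\mid a\in\mathcal{L}_{x^*})\ge 1-\Lambda_i$ for every $i\ge i_0$; in the w.h.p.\ reading of \defref{ILM} this says the run $\omega$-identifies $x^*$ w.h.p. Because step~\ref{al:1step} of \algref{aposmm-s} advances each active run by one iterate per MANSO iteration, a run launched by iteration $k-i_0$ has generated at least $i_0$ iterates by iteration $k$, so the probability that $x^*$ has not been recorded in $\hat X^*_k$ is at most $\Lambda_i$ for the corresponding $i$, which vanishes as $k\to\infty$. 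Multiplying this bound by the ``a sample landed in $\mathcal{L}_{x^*}$'' probability from Step 1 and letting $k\to\infty$ gives $P_\xi(x^*\text{ not }\omega\text{-identified by iteration }k)\to 0$.

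\textbf{Step 3: the identifying run is not prematurely terminated --- the main obstacle.} What remains, and is the crux, is to rule out that step~\ref{al:term} of \algref{aposmm-s} removes the run of Step 1 before it records $x^*$ in $\hat X^*_k$. Here \thmref{finite} and \subassref{LSO}{LSO_omega} do the work: by \thmref{finite} only finitely many LSO runs are ever started, so after some finite iteration the active set changes only through runs that reach and record their minima; and \subassref{LSO}{LSO_omega} keeps the iterates of runs started in distinct domains of attraction separated, so a run in $\mathcal{L}_{x^*}$ is never drawn within the termination radius of a run belonging to another $\mathcal{L}_{y^*}$, while two runs in the \emph{same} $\mathcal{L}_{x^*}$ both converge to $x^*$, so the later-started one is culled and the earlier one survives the $\le i_0$ iterates it needs --- and once $x^*\in\hat X^*_k$ it stays there because $\hat X^*_k$ only grows. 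I would build this step around the \emph{first} run ever started in $\mathcal{L}_{x^*}$ and its fixed budget of $i_0$ iterates, invoking \subassref{prob}{min_spacing} (finitely many stationary points) together with \thmref{finite} exactly to exclude an unbounded stream of interfering runs --- near points of $Y^*$ or elsewhere --- that could otherwise terminate it.
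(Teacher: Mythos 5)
Your overall architecture parallels the paper's proof: a uniform sample lands in $\mathcal{L}_{x^*}$ with probability tending to one, \subassref{LSO}{LSO_nu} then forces the resulting run to $\omega$-identify $x^*$ up to a vanishing $\Lambda_i$, and \subassref{LSO}{overlap} together with \subassref{LSO}{LSO_omega} and the ``terminate only the newer run'' semantics of step~\ref{al:term} keep that run alive long enough (the paper, like you, tolerates the $\omega$ vs.\ $2\omega$ mismatch). However, there is a genuine gap in your Step 1 treatment of condition \textbf{S1}. You claim that the sampled point nearest $x^*$ is ``probabilistically best inside its own $\B{\cdot}{r_k}$, hence clears \textbf{S1}.'' Nothing supports this: \textbf{S1} is a one-sided Chebyshev-type test, and \lemref{RK} establishes only the inclusion $\cC(a;r)\subseteq\A{a}{r}{n}{\beta}$ --- points with \emph{smaller} true value are probabilistically better --- not the converse. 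A point $z\in\B{a}{r_k}\cap S_k$ with $f(z)\geq f(a)$ can perfectly well satisfy $P_{\xi}\bigl(\hat f_n(z)-\hat f_n(a)>\sqrt{\Var[\xi]{\hat f_n(z)-\hat f_n(a)}/\beta}\bigr)\leq\beta$ (e.g.\ under symmetric noise with $f(z)-f(a)$ small relative to the threshold), and thus block the start from $a$ no matter how close $a$ is to $x^*$. So ``lowest nearby true value'' does not yield clearance of \textbf{S1}, and this is precisely the step your argument needs. The paper closes this hole differently: it conditions on $a$ being the \emph{first} sample in $\mathcal{L}_{x^*}$, splits on whether the blocking point $\bar a$ lies in $\mathcal{L}_{x^*}$ (impossible by the first-sample assumption) or outside it, and in the latter case uses $r_{\bar k}\to 0$ so that the fixed point $\bar a$ eventually exits $\B{a}{r_{\bar k}}$ and the run starts from $a$ at some later iteration $k_0$, after which the $\Lambda_{\bar k-k_0}$ bound applies. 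You should replace your ``nearest point clears \textbf{S1}'' claim with an argument of this shrinking-radius type.

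Two smaller remarks. First, your Step 3 leans on \thmref{finite}, which the paper's proof does not need (it rules out premature termination directly from \subassref{LSO}{overlap}, \subassref{LSO}{LSO_omega}, and the convention that only the latest colliding run is killed); note also that invoking \thmref{finite} imports \assref{prob}, which is not among the stated hypotheses of this theorem. Second, your explicit union bound over the finitely many minima and the bound $(1-c)^{|S_{k-i_0}|}$ are fine and consistent with the paper's appeal to the positive measure of $\mathcal{L}_{x^*}\setminus\cQ_\tau$; these are not points of concern.
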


\begin{proof}
	Let $a$ be a point sampled in iteration $k$ of~\algref{aposmm-s} in the domain of attraction
	$\mathcal{L}_{x^*}$ of the local minimum $x^*$ (see Definition~\ref{def:doa}) 
	for the first time and none of the points sampled before belong to $\mathcal{L}_{x^*}$. 
	Since
	$m(\mathcal{L}_{x^*})>0$ because of~\subassref{LSO}{measures},  the probability of obtaining at least a 
	uniformly sampled point in $\mathcal{L}_{x^*}$ approaches 1 as the number of
	samples increases to infinity \cite{Brooks1958}. We also assume that  the 
	local minimum $x^*$ has not been identified yet.
	
	Recall $\hat X^*_{k}$ is the collection of approximate local minima
	identified up to iteration $k$ and that set $A_k$ is the collection of
	sampled points from which the LSO run has started and is still active up to 
	iteration $k$. Recall the definition (see Definition~\ref{def:ILM}) 
	of the event that the local
	minimum is identified at any arbitrary iteration $l$ as \[ I_{l}=: \{
	\forall~l' > l~\exists p \in A_{l'}~: \|\mathbf{X}^{p}_{ l'}-x^*\|<\omega\}, \] where
	$\{x^{p}_{l'}, l\in \N\}$ is the sequence of iterates generated by LSO
	at $p$.  Now, let us  compute the  probability that the local minimum has been
	identified up to the iteration $\bar k \geq k$.
	Observe that
	\begin{align}
		\nonumber
		&P_{\xi}(I_{\bar  k}|S_{\bar k} \cap \mathcal{L}_{x^*} \neq \emptyset)
		\\
		\nonumber
		&= 1- P_{\xi}(I_{\bar  k}^C|S_{\bar k} \cap \mathcal{L}_{x^*} \neq \emptyset)
		\\
		\nonumber
		&= 1- \bigg(P_{\xi}(\text{LSO has started from $a$ in iteration $k$},I_{\bar  k}^C 
		| S_{\bar k} \cap \mathcal{L}_{x^*} \neq \emptyset ) 
		\\
		&\quad + P_{\xi}(\text{ LSO has not started from $a$ in iteration $k$ },I_{\bar  k}^C 
		| S_{\bar k} \cap \mathcal{L}_{x^*} \neq \emptyset) 
		\bigg). 
		\label{eq:t1}
	\end{align}
	We first analyze the first probabilistic term  in~\eqref{eq:t1}. Since LSO starts at
	$a$, 
	then by definition of $A_k$, $a \in A_{k}$. Even if we sample another
	point in $\mathcal{L}_{x^*}$ at any iteration $\bar k>k$, we will have
	$a\in A_{\bar k}$ because we terminate only the latest LSO in
	step~\ref{al:term} of MANSO. Also, if at some iteration $\bar k>k$ any of
	the iterates generated by some other sampled point, $b$ (at iteration
	$k'<k$) in $\mathcal{L}_{y^*}$, for $x^*\neq y^*$, jumps into
	$\mathcal{L}_{x^*}$, then because of~\subassref{LSO}{overlap}
	and~\subassref{LSO}{LSO_omega} we will still have $a \in A_{\bar k}$ on
	the event $I_{\bar  k}^C$, since LSO at $a$ will not be terminated at
	step~\ref{al:term} of MANSO because
	\[ P_{\xi}\left\{   \|\mathbf{Y}^b_{\bar k-k'}- \mathbf{X}^a_{\bar k-k} \|>\omega \Big| 
	\mathbf{F}^b_{\bar k-k'-1}   \right\}\geq P_{\xi}\left\{  \min_{\forall \{i \geq 1\}} \|\mathbf{Y}^b_{\bar k-k'}- \mathbf{X}^a_{i} \|>\omega \Big| 
	\mathbf{F}^b_{\bar k-k'-1} , \mathbf{F}^a_{\infty} \right\}=1 \text{ for all } \bar k -k \in \N .\]
	Consequently, since $a \in \mathcal{L}_{x^*}$ and it remains in $A_{\bar k}$ for any $\bar k > k$, then because
	of~\subassref{LSO}{LSO_nu} there exists a $k_0 \in \N$ such that for any iteration $\bar
	k -k > k_0$ of LSO at $a$, 
	we have
	\begin{align}
		P_{\xi}&(\text{LSO has started from $a$ in iteration $k$, }  I_{\bar  k}^C | S_{\bar k} \cap 
		\mathcal{L}_{x^*} \neq \emptyset ) 
		< \frac{1}{2}\Lambda_{\bar k- k}.
		\label{eq:t2}
	\end{align}
	Note that we use the same iteration counter for the LSO run and MANSO, since we
	progress one step of each  active LSO in $A_{\bar k}$ in each  iteration of
	MANSO (see Step~\ref{al:1step}). 
	Choosing $\bar k$ large enough such that $\bar k-k > k_0$, we obtain the
	last inequality in~\eqref{eq:t2} by using~\subassref{LSO}{LSO_nu}, since
	LSO at $a$ never gets terminated given that the local minimum $x^*$ has not been
	$\omega$-identified.
	
	Next, we analyze the second probabilistic term in~\eqref{eq:t1}. In this 
	case LSO may not start from $a$ in iteration $k$, since it gets
	rejected because of any of the following conditions
	from~\tabref{CON} not being satisfied by $a$ in the iteration $k$.
	Using these conditions in the third term of~\eqref{eq:t1}, we have
	\begin{align}
		\nonumber
		P_{\xi}(&\text{ LSO has not started from $a$ in iteration $k$},I_{\bar  k}^C 
		| S_{\bar k} \cap \mathcal{L}_{x^*} \neq \emptyset )
		\\
		\nonumber
		&\leq P_{\xi}(\text{S(1)},I_{\bar  k}^C 
		| S_{\bar k} \cap \mathcal{L}_{x^*} \neq \emptyset ) +  
		P_{\xi}(\text{S(2)},I_{\bar  k}^C 
		| S_{\bar k} \cap \mathcal{L}_{x^*} \neq \emptyset )
		\\
		\nonumber
		& \quad +P_{\xi}(\text{S(3)},I_{\bar  k}^C 
		| S_{\bar k} \cap \mathcal{L}_{x^*} \neq \emptyset ) +  
		P_{\xi}(\text{S(4)},I_{\bar  k}^C 
		| S_{\bar k} \cap \mathcal{L}_{x^*} \neq \emptyset ),
	\end{align}
	where $S(1)$ denotes the event that $a$ does not satisfy condition \textbf{S1} from
	~\tabref{CON} and similarly for \textbf{S2}, \textbf{S3}, and \textbf{S4}.
	The last three cases are straightforward to analyze. First consider event \textbf{S2}. Since we
	assumed that the local minimum $x^*$ has not been identified yet, then the
	event $\{a \in \cup_{x\in \hat X_{ k -1}^*} \B{x}{\omega}\}$ is of
	probability measure zero, and hence
	\begin{align} 
		P_{\xi}(\text{S(2)},I_{\bar  k}^C 
		| S_{\bar k} \cap \mathcal{L}_{x^*} \neq \emptyset) = 0.
		\label{eq:t3}
	\end{align}
	For event \textbf{S3}, $a\in \cQ_{\tau} $,  and we assumed at the beginning that $a \in
	\mathcal{L}_{x^*} $ as well. But because of~\subassref{prob}{min_boundary}
	$\mathcal{L}_{x^*} \cap \cQ_{\tau} = \emptyset$. Therefore it follows that
	\begin{align}
		P_{\xi}(\text{S(3)},I_{\bar  k}^C 
		| S_{\bar k} \cap \mathcal{L}_{x^*} \neq \emptyset ) =0 .
		\label{eq:t4}
	\end{align}
	Since we assumed at the beginning that for any $\bar k < k$ none of the
	uniformly sampled points belong to  $\mathcal{L}_{x^*}$,
	the final event 
	\textbf{S4} is an impossible event, and therefore 
	\begin{align}
		P_{\xi}(\text{S(4)},I_{\bar  k}^C 
		| S_{\bar k} \cap \mathcal{L}_{x^*} \neq \emptyset ) = 0.
		\label{eq:t5}
	\end{align}
	For the first event \textbf{S1} we assumed that there exists $\bar a \in
	\B{a}{r_{ k}} \cap (S_{ k})$ that does not satisfies \textbf{S1} for a point  
	$a$ in $\mathcal{L}_{x^*}$. Now notice that
	\begin{align}
		\nonumber
		P_{\xi}(&\text{S(1)},I_{\bar  k}^C 
		| S_{\bar k} \cap \mathcal{L}_{x^*} \neq \emptyset ) 
		\\
		\nonumber
		&= P_{\xi}(\bar a \notin \mathcal{L}_{x^*} ,I_{\bar  k}^C 
		| S_{\bar k} \cap \mathcal{L}_{x^*} \neq \emptyset )
		\\
		&\quad +P_{\xi}(\bar a \in \mathcal{L}_{x^*},I_{\bar  k}^C 
		| S_{\bar k} \cap \mathcal{L}_{x^*} \neq \emptyset ).
	\end{align}
	
	First consider the case when $\bar a \notin  \mathcal{L}_{x^*} $. Since $r_k
	\to 0$  as $k \to \infty$ and $a \in  \mathcal{L}_{x^*}$, there must exist a
	$k_0 \geq k $ such  that $\bar a \notin \B{a}{r_{\bar k}} \cap (S_{\bar
		k})$ for all $\bar k \geq k_0$. Therefore, LSO will start from $a$ at iteration
	$k_0$. Using arguments similar to those used in~\eqref{eq:t2},  
	there exists a $k'' \geq  k $ such that  $\forall \bar k \geq k''$
	\begin{align} 
		P_{\xi}(\bar a \notin \mathcal{L}_{x^*} ,I_{\bar  k}^C 
		| S_{\bar k} \cap \mathcal{L}_{x^*} \neq \emptyset ) \leq
		\frac{1}{2}\Lambda_{\bar k-k_0}.
		\label{eq:t6}
	\end{align}
	On the other hand,   because of our assumption that for any $\bar k < k$ none of the
	uniformly sampled points belong to  
	$\mathcal{L}_{x^*}$, the case $\bar a \in 
	\mathcal{L}_{x^*}$  is an impossible event, 
	and thus
	\begin{align}
		&P_{\xi}(\bar a \in \mathcal{L}_{x^*},I_{\bar  k}^C 
		| S_{\bar k} \cap \mathcal{L}_{x^*} \neq \emptyset )
		=0.
		\label{eq:t7}
	\end{align}
	Since $m(\mathcal{L}_{x^*})>0$ due to~\subassref{LSO}{measures},  the probability of
	obtaining at least a uniformly sampled point in $\mathcal{L}_{x^*}$ approaches 1 as
	the number of samples increases to infinity \cite{Brooks1958}. Therefore
	$\lim_{\bar k \to \infty} P_{\xi}( S_{\bar k} \cap \mathcal{L}_{x^*} \neq \emptyset) 
	= 1$. 
	Now, substituting equation~\eqref{eq:t2}-\eqref{eq:t7} into~\eqref{eq:t1},
	we obtain, for large enough $\bar k$,
	\begin{align}
		P_{\xi}(I_{\bar  k} |S_{\bar k} \cap \mathcal{L}_{x^*} \neq \emptyset ) > 1- \bar\Lambda_{\bar k},
	\end{align}
	where $\bar\Lambda_{\bar k} = \min \{\Lambda_{\bar k-k_0}, \Lambda_{\bar k-k} \}$. 
	Therefore, we have shown that any local minimum $x^*  \in  X^*$ will be
	identified w.h.p. Also, note that because of~\subassref{LSO}{LSO_nu}, 
	$\bar  \Lambda_{\bar k}$ decreases to 0 as the number of iterations $\bar k$ of
	LSO increases with the number of iterations of MANSO.
	\flushright \qed
\end{proof}

\section{Numerical Experiments}

We compare implementations of MANSO in their ability to 
solve difficult synthetic benchmark
problems and to identify optimal variational parameters within the quantum approximate optimization
algorithm (QAOA)~\cite{farhi2014quantum}. Our MANSO implementation
and scripts to perform our numerical experiments are available: 
\begin{center}\url{https://github.com/prat212/MANSO.git}\end{center}

\subsection{Synthetic Benchmark Experiments} \label{sec:numerical}

We benchmark our implementation of MANSO on 
nonconvex optimization 
problems with large variance in their observations. In particular we fix 
two non-convex benchmark functions, Branin-Hoo ($d=2$) and Shekel
 ($d=4,6,8 \text{ and } 10$), and make each non-convex objective evaluation
  stochastic by adding a Gaussian noise with variance 1. 
We add this significant noise only to make the testing of MANSO rigorous and robust.
We generate 10 sample paths of Gaussian noise to
create a set of 10 
problems each for the Branin and Shekel functions. We seek to find all
of the local minima for each problem within a fixed budget of function
evaluations, $B$. We use ASTRO-DF~\cite{Shashaani2018} as the local method; it
is a derivative-free
trust-region stochastic
 optimizer selected because of its theoretical guarantee to converge to
first-order critical points of the objective function. Other optimizers with
 convergence guarantees to first-order critical points (e.g.,~\cite{Ghadimi2015,Ghadimi2014})
  could naturally be used as the local method in MANSO.  
To improve performance in our numerical experiments,
we ensure that 
there are at most $10$ active LSO runs at any given iteration. That is at
each iteration $k$ of MANSO, we sample a point uniformly in $\cD$ if the
total number of active runs is no larger than a fixed threshold, heuristically 
set to 10 in the experiments. Naturally, MANSO ensures that the conditions
 listed in~\tabref{CON} are satisfied before starting an LSO run from all the sampled points.
 Note that for condition (S1) in~\tabref{CON}, for any two points $\{x,a\}\in
\cD$, our implementation estimates
$\Var[\xi]{\hat f_{n}(x) -\hat f_{n}(a)) }$ using the $n$ samples of $f(x)$ and $f(a)$.


We compare the
performance of MANSO with drawing points uniformly in the search domain. 
We compare only with such a random search method because 
we are unaware of other methods \Rev{that} aim to find all local optima of stochastic
nonconvex functions. 
Our empirical results demonstrate that MANSO
outperforms random search in identifying points that are within a small ball of all
local minima. As we would like random search to perform equally well across
problems independent of problem dimension, we consider each ball around a local
minimum to always have a volume that is a small fraction (e.g., $\nicefrac{1}{1000}$) of the volume of
the domain. This number is arbitrarily chosen but gives a sense that how fast MANSO
and random search can evaluate points near the local minima.
We measure the performance of methods using data profiles~\cite{JJMSMW09}.

\subsection{Data profiles}
Data profiles present the fraction of problems ``solved'' from a set of
problems $\mathcal{P}$ after a certain number of function evaluations 
by an implementation of method $h$ in a set of methods $\mathcal{H}$. The set of
implementations $\cH$ is created by adjusting $\beta, \omega,$ and $n$ of MANSO.
For a given objective function,
we create different problems by  changing the initial random seed. For our comparisons, the set 
$\mathcal{H}$ contains a uniform random sampling method and different versions of MANSO, obtained
by varying its tuning parameters $\omega, \tau, \beta$, and $n$. 
Data profiles mark a
problem instance $p\in \cP$ as solved based on a user-defined test
criterion. We use the 
criterion proposed in~\cite{Larson2018} to classify that a problem $p\in \cP$ is
solved: a problem is solved when a point is evaluated near each local minima for
the problem.
Let there be $j$ local minima for a given problem. 
Mathematically, we define a test that ensures that a local minima $x^*\in\{x_1^*,x_2^*,\ldots,x_j^*\}$ is  identified at level
$\rho_d(\zeta)$ after $e$ evaluations, as
\begin{align}
   \exists x \in \cE_e \text{ with } 
    \|x-x^*\|\leq \rho_d(\zeta)  ,
    \label{eq:tc}
\end{align} 
where $\rho_d(\zeta) = \frac{1}{\sqrt{\pi}} \sqrt[d] {\Gamma(1+{d/2})m(\cD)
\zeta}$ and the set $\cE_e$ is constructed by sequentially adding points the
number of times they are being evaluated by a method $h\in \cH$. That is, all
the points in $S_k \cup L_k$  till $e$ out of $B$ budget is used. Notice that
    the volume of a ball of radius
     $\rho_d(\zeta)$ is $\zeta$ times the volume of the search  domain $\cD$. 
Hereafter, we use~\eqref{eq:tc} for a problem
$p\in \cP$, method $h\in \cH$  and
for all $x^*\in \{x^*_i\}_{i=1}^{j}$ to compute
\[t_{p,h}(x^*)=\min \left\{e\geq 1: \exists x \in \cE_e \text{ with } 
\|x-x^*\|\leq \rho_d(\zeta)\right\}, \] 
and define the data profile metric for $e>0$ function evaluations as
\begin{align}
    d_h(e,x^*)= \frac{|\{p\in \cP: t_{p,h}(x^*)\leq e, \}|}{|\cP|}.
\end{align}

In the next section, we present the details of two benchmark nonconvex
functions on which we evaluated the performance of MANSO.

\subsection{Benchmark problems}
We consider the Branin--Hoo and Shekel (4, 6, 8,
and 10 dimensions) functions. Results for $d=6$ and $d=8$ Shekel problems appear in Appendix~\ref{app:1}. 

\paragraph{Branin--Hoo function:}
The Branin-Hoo 
function~\cite{branin} is a two-dimensional nonconvex problem with three local minima with the same
optimal value. Mathematically, it is defined as
\begin{align}
    f(x^{(1)},x^{(2)})= a(x^{(2)}-b(x^{(1)})^2+cx^{(1)}-r)^2+s(1-t)\cos(x^{(1)})+s,
\end{align}
where $a=1,b=5.1/(4\pi^2),c=5/\pi,r=6,s=10$ and $t=1/(8\pi)$ and $\cD=[-5,10]\times[0,15]$.

\paragraph{Shekel function:}
The Shekel function is the $d$-dimensional nonconvex problem with $m$ local
minima:
\begin{align}
    f(\mathbf{x})= - \sum_{i=1}^{m} \left( 2^{-d+4}\sum_{j=1}^{d}  (x^{(j)}- C_{ij})^2 + c_i\right)^{-1},
\end{align}
where $\mathbf{x}\in [0,10]^d$, $C_{d\times m}=[x_1^*,x^*_2,x^*_3,\ldots,x^*_m]$ is the set of
 local minima and $c_i=[w_1,w_2,\ldots,w_m]^T$ is the weights of corresponding local minima; 
 the smallest weight determines the global minima.
In our experiments we choose $c_i=   \{ 0.1, 0.2, 0.2, 0.4, 0.4, 0.6, 0.3, 0.7, 0.5, 0.5\}$ for 
a set of $m=10$ local minima.

\subsection{Experimental analysis}
\begin{figure}[t]
    \centering
    \includegraphics[ width=0.8\linewidth]{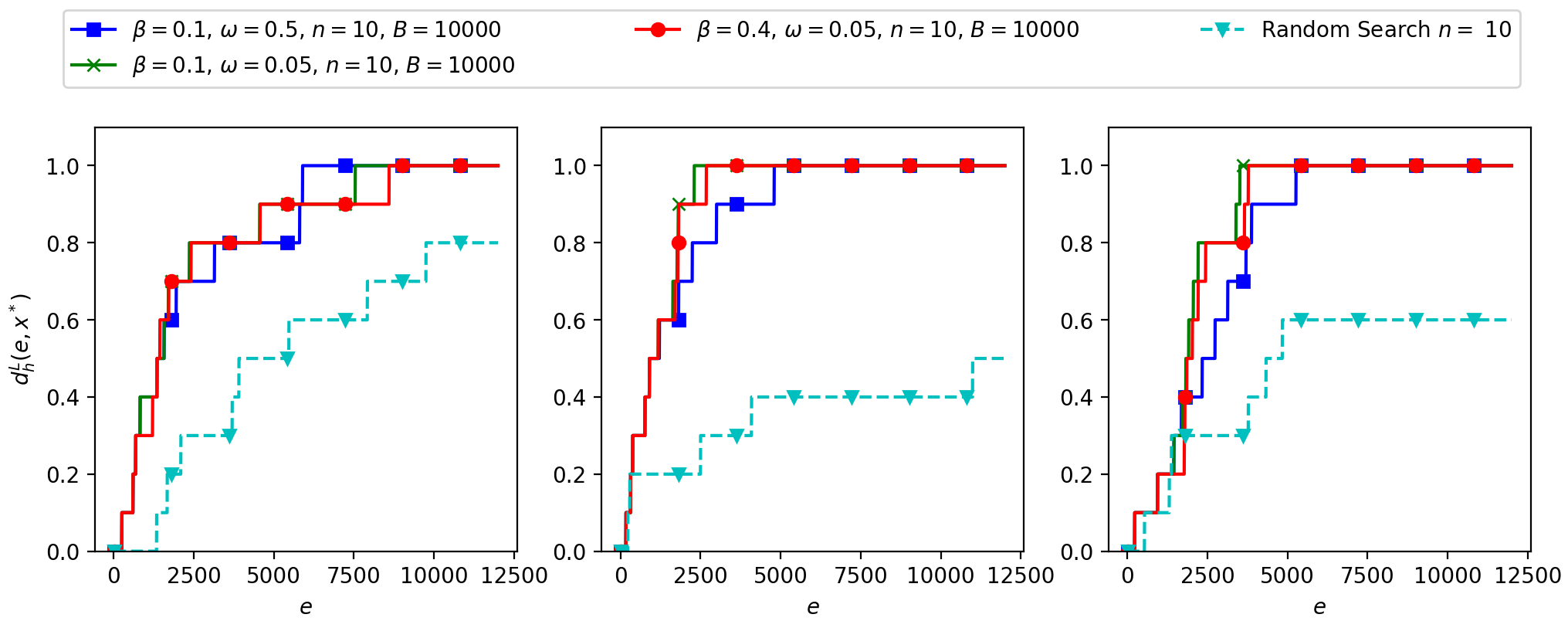}
    \caption{Data profiles for the Branin--Hoo function for finding each local minimum. $\zeta=10^{-3}$
         and $|\cP|=10$. 
     }
    \label{fig:Branin2D}
\end{figure}
\begin{figure}[t]
    \centering
    \includegraphics[ width=0.9\linewidth]{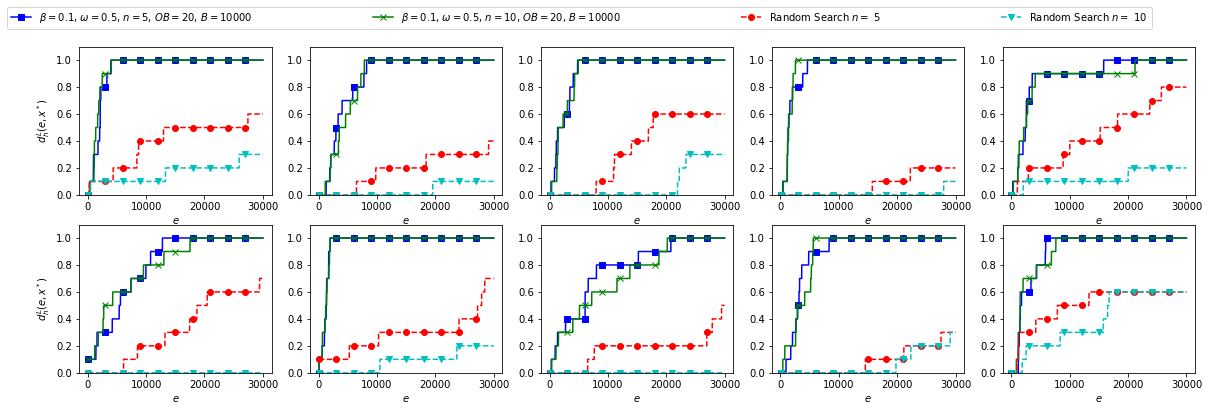}
    \caption{Data profiles for the Shekel-4D function for finding each local minimum. $\zeta=10^{-4}$ 
        and $|\cP|=10$. 
    }
    \label{fig:Shekel4D}
\end{figure}

\begin{figure}[b]
    \centering
    \includegraphics[ width=0.9\linewidth]{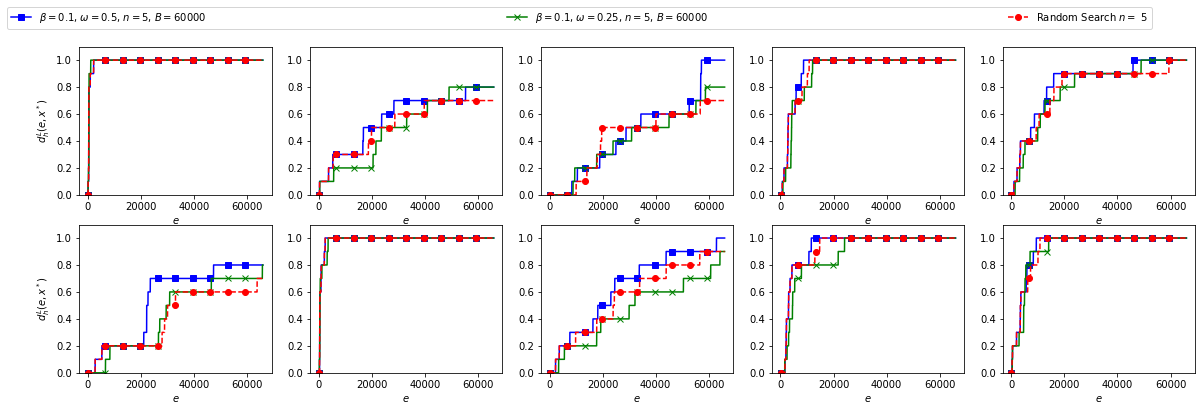}  
    \caption{Data profiles for the Shekel-10D function for finding each local minimum.
        $\zeta=10^{-4}$ and $|\cP|=10$. 
    }
    \label{fig:Shekel10D}
\end{figure}


We plot the data profiles for the Branin function ($d=2$) in Figure~\ref{fig:Branin2D}
and the Shekel $d=4$ and $d=10$ functions in 
Figures~\ref{fig:Shekel4D} and
~\ref{fig:Shekel10D}, respectively. 
We observe from the data profiles presented in each plot that MANSO outperforms the uniform
random search method in finding a point in a ball of volume $10^{-4}$
($10^{-3}$ for the Branin function) 
times  the volume of the domain $\cD$
centered at the respective true local minima. Next, we discuss the effect of MANSO hyperparameters on its performance.  \textit{Effect of $n$:} Recall that as $n$
increases the variance in estimates of the function values decrease. However,
this confidence is attained at the  expense of shedding more budget.
Consequently, the number of points evaluated by MANSO decrease. 
Hence  varying $n$ controls the trade-off between exploration and variance in function evaluation.  
We can observe
this effect by comparing the MANSO performance on Shekel-4D function
in~Figure~\ref{fig:Shekel4D}. Nonetheless, it is evident from the plots 
in~Figure~\ref{fig:Shekel4D}
that the increasing $n$ does not guarantee that finding the local minima will be faster.
\textit{Effect of $\omega$:} Recall
that $\omega$ is a hyperparameter used in Step 7 of MANSO, to terminate an LSO
run if any of its \Rev{iterates are} within $2\omega$ distance of any iterate generated
by some other LSO. Intuitively, a larger $\omega$ would result in more
termination. In Figure~\ref{fig:Branin2D}, the  effect of $\omega$ can be
observed by  comparing the green ($\omega =0.05$) and blue ($\omega =0.5$)
lines. Note that MANSO under the green experiment was able to
explore more points and thus identified 2 local minima of the  Branin function 
faster than the blue experiment. In the blue experiments, due to larger $\omega$, the number of points evaluated by MANSO is less
than the green as a large portion of the budget is used for evaluating new
sampled points. 

\subsection{Variational Parameter Optimization}~\label{sec:QAOA}

Quantum approximate optimization algorithm (QAOA) is a
hybrid algorithm that uses a parameterized trial quantum state $\psi(x)$ as
defined by the parameters $x$. (The values in $x \in \mathbb{R}^{2p}$ are
rotations or angles that parameterize $2p$ unitary operators.) What is
desired is parameters $x$ such that when the trial state is measured, the
measurement outcome corresponds to the solution of the optimization problem.
This is achieved by finding parameters $x$ that give a large expected value for
$\psi(x)^T H \psi(x)$, where $H$ is the problem Hamiltonian encoding some
classical objective $h$. Under certain conditions on the Hamiltonian $H$,
$f(x)$ must be evaluated by using a quantum computer. The search for optimal
parameters $x$ can therefore be considered as a (classical) numerical
optimization problem of the form \eqref{eq:prob_def} with $f(x) = \psi(x)^T H
\psi(x)$. The stochasticity in the objective arises from not being able to
compute the value of observable $\psi(x)^T H \psi(x)$ by using a quantum circuit
but rather having to compute the objective from a sample: $f(x) = \sum_{y_i \in
    \text{sample}}h(y_i)$.

QAOA has nontrivial performance guarantees~\cite{farhi2014quantum,
    farhi2014quantumbounded} and requires the execution of only moderately sized
quantum circuits, with the depth controlled by the number of steps $p$. For
these reasons, QAOA is an especially promising candidate algorithm for
demonstrating quantum advantage on near-term quantum computers. Yet, the
quality of the solution produced by QAOA depends critically on the quality of
the parameters $x$ used by the algorithm. Identifying such parameters is 
difficult because the
objective landscape is highly nonconvex with many local minima with poor
objective values~\cite{zhou2018quantum,Shaydulin2019}.
Figure~\ref{fig:2d_qaoa} shows
an example contour
plot with $p=1$.
While nonglobal optima are not
necessarily of interest in the QAOA problem setting, we consider the difficulty
of finding a global optimum to be a considerable test of our MANSO implementation. 

We consider the problem of using QAOA to find the maximum cut on the
Petersen graph with \Rev{a} depth of $p=5$, that is, $d=10$. The global optimal value
for this problem is $-12$. The performance of MANSO to identify maximum cut
with three local solvers (ASTRO-DF~\cite{Shashaani2018},
BOBYQA~\cite{cartis2018improving}, 
and Snobfit~\cite{huyer2008snobfit}) are summarized in
Figure~\ref{fig:2d_qaoa}. Moreover, 
we also considered non-MANSO global \Rev{optimizers} such as Bayesian optimization~\cite{BOFernando}
to solve a deterministic version of \Rev{the} QAOA problem. However, the method was significantly slow due to 
large matrix computation and produced the best candidate global minima with \Rev{a} value -11.74 only after 
 5000 evaluations. Consequently, we are not comparing MANSO with other approaches such 
 as Bayesian optimization.
For each local \Rev{solver,} we run MANSO on the MAXCUT problem
with a budget of 150,000 function evaluations and we check for the termination
condition in step 7 of MANSO after 500 function evaluations \Rev{have been performed} by the local search method. 
We also fix $n=5$, $\omega=0.01$, $\beta=0.1$
and  $\tau=0.01$. We repeat each experiment with a given local
solver 20 times and plot the range of best function value identified in
Figure~\ref{fig:2d_qaoa}. 
Although BOBYQA is designed to solve
deterministic problems, we use
it for this stochastic problem as it has been reported
that it empirically performs well on problems
with stochastic noise~\cite{cartis2018improving}. In particular, it is evident from  our experimental
result in Figure~\ref{fig:2d_qaoa} (right) too that BOBYQA performance is
competitive with other stochastic solvers. However, \Rev{we note} that BOBYQA
failed on 10 out of 20 experiments as it produced singular Hessian matrices of
\Rev{the} noisy QAOA objective.

\begin{figure}
    \centering
    \includegraphics[scale = 0.45]{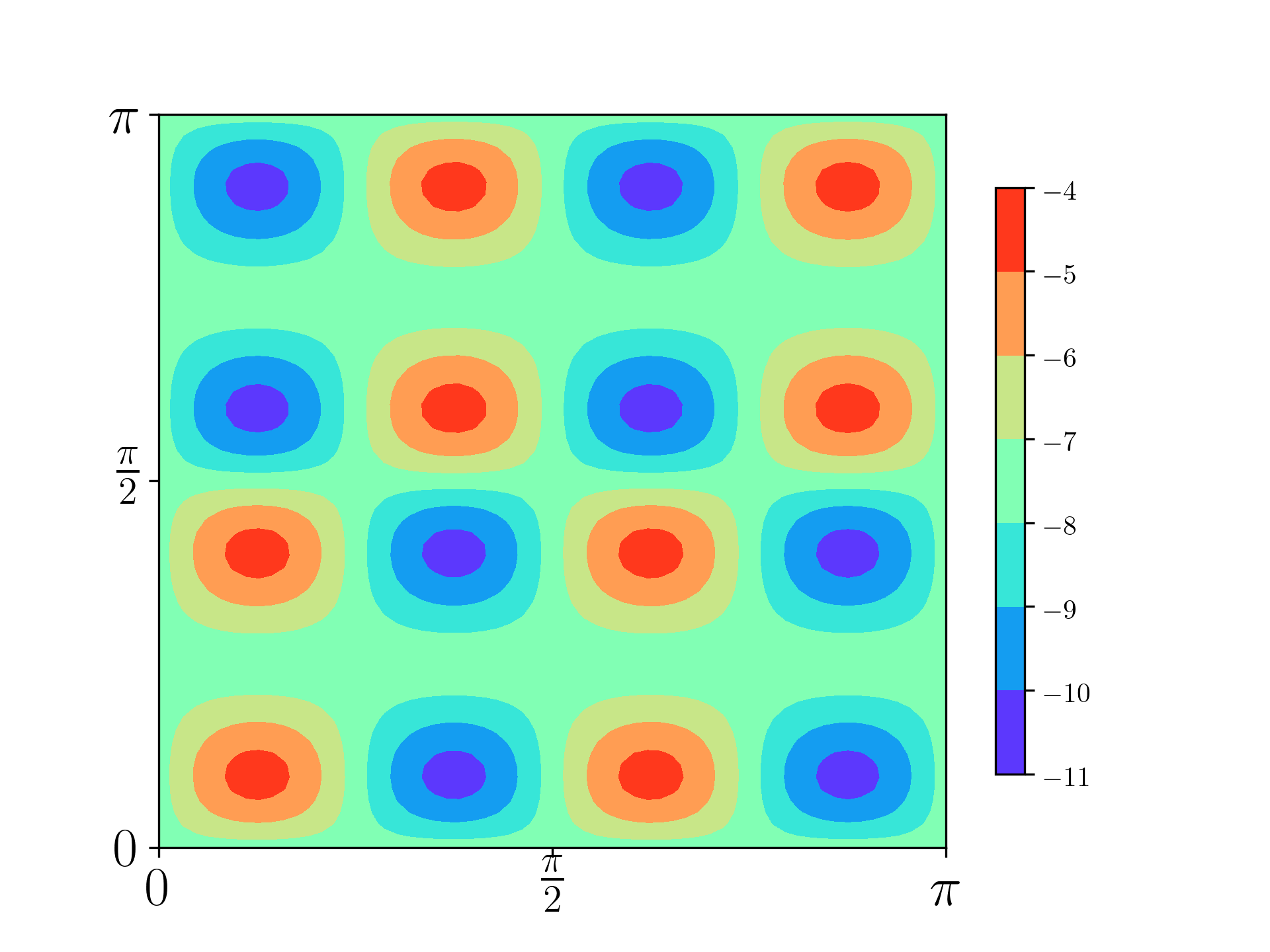}
    \includegraphics[scale = 0.55]{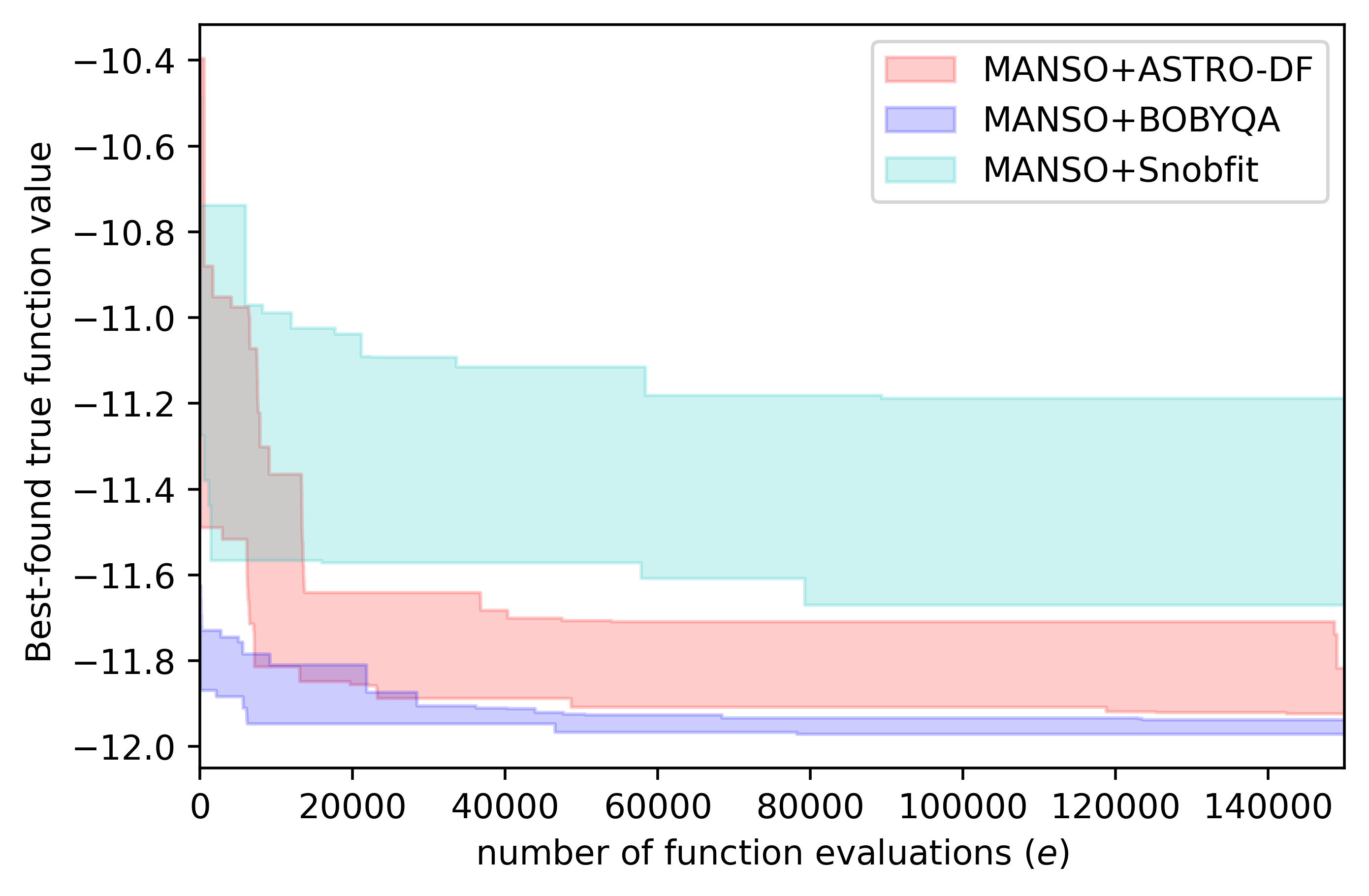}
    \caption{(left) Example landscape for MAXCUT on the Petersen graph with $p=1$. (right) Performance of MANSO in identifying optimal parameters within QAOA with various local solvers.\label{fig:2d_qaoa}} 
\end{figure}

\section{Conclusion}
\label{sec:concl}
We propose the MANSO algorithm to identify all the local minima of a
stochastic nonconvex function. We construct an efficient scheme to judiciously determine when to 
start a local stochastic optimization run from a sampled point in a compact
search domain. We show that under that MANSO starts only finitely many
local stochastic optimization runs. We also show that MANSO identifies all the
local minima asymptotically with high probability, given that the local stochastic
optimization method is guaranteed to converge to a local minimum with high
probability. MANSO's theoretical guarantees also require that the sequence of
iterates generated from the local stochastic search started in a domain of
attraction and cannot leave that domain with high probability.
(Certainly, this is a restrictive assumption for a stochastic optimization
method, but it is analogous to the assumptions in the foundational MLSL
work~\cite{RinnooyKan1987, RinnooyKan1987ii}.) Our experimental results show that
MANSO can display strong performance even when coupled with a local optimization
that does not satisfy such a restrictive assumption. The assumption that there
are no flat regions in the true objective function may be removed by using the
techniques developed in~\cite{Locatelli1998} for a multistart algorithm MLSL for
deterministic nonconvex objectives.

Furthermore, we demonstrate the efficacy of our algorithm on two benchmark
problems with dimensions ranging from 2 to 10, and we compare the performance
with that of a uniform random search
method. We also use MANSO to find the global minima of a highly nonconvex
10-dimensional Peterson graph. We aim to apply MANSO to more complex and 
higher-dimension benchmark functions and application problems as part of our future
work. Similar to~\cite{Larson2018}, an asynchronously parallel version of MANSO
can be developed to improve its computational performance.

\section*{Acknowledgments}
This work was supported by the U.S. Department of Energy, Office of Science,
     Office of Advanced Scientific Computing Research, Accelerated Research for
Quantum Computing program, under contract number DE-AC02-06CH11357.

\bibliographystyle{spmpsci}      
\bibliography{../../bibs/refs}   

\begin{thebibliography}{10}
\providecommand{\url}[1]{{#1}}
\providecommand{\urlprefix}{URL }
\expandafter\ifx\csname urlstyle\endcsname\relax
  \providecommand{\doi}[1]{DOI~\discretionary{}{}{}#1}\else
  \providecommand{\doi}{DOI~\discretionary{}{}{}\begingroup
  \urlstyle{rm}\Url}\fi

\bibitem{adamcik2018amyloid}
Adamcik, J., Mezzenga, R.: Amyloid polymorphism in the protein folding and
  aggregation energy landscape.
\newblock Angewandte Ch\'{e}mie International Edition \textbf{57}(28),
  8370--8382 (2018).
\newblock \doi{10.1002/anie.201713416}

\bibitem{Brooks1958}
Brooks, S.H.: A discussion of random methods for seeking maxima.
\newblock Operations Research \textbf{6}(2), 244--251 (1958).
\newblock \doi{10.1287/opre.6.2.244}

\bibitem{cartis2018improving}
Cartis, C., Fiala, J., Marteau, B., Roberts, L.: Improving the flexibility and
  robustness of model-based derivative-free optimization solvers.
\newblock {ACM} Transactions on Mathematical Software \textbf{45}(3), 1--41
  (2019).
\newblock \doi{10.1145/3338517}

\bibitem{diSerafino2010}
{di Serafino}, D., Gomez, S., Milano, L., Riccio, F., Toraldo, G.: A genetic
  algorithm for a global optimization problem arising in the detection of
  gravitational waves.
\newblock Journal of Global Optimization \textbf{48}(1), 41--55 (2010).
\newblock \doi{10.1007/s10898-010-9525-9}

\bibitem{durrett2010probability}
Durrett, R.: Probability: Theory and Examples.
\newblock Cambridge Series in Statistical and Probabilistic Mathematics.
  Cambridge University Press (2010).
\newblock \doi{10.1017/CBO9780511779398}

\bibitem{farhi2014quantum}
Farhi, E., Goldstone, J., Gutmann, S.: A quantum approximate optimization
  algorithm.
\newblock arXiv:1411.4028  (2014)

\bibitem{farhi2014quantumbounded}
Farhi, E., Goldstone, J., Gutmann, S.: A quantum approximate optimization
  algorithm applied to a bounded occurrence constraint problem.
\newblock arXiv:1412.6062  (2014)

\bibitem{floudas1999global}
Floudas, C., Klepeis, J., Pardalos, P.: Global optimization approaches in
  protein folding and peptide docking.
\newblock In: {DIMACS} Series in Discrete Mathematics and Theoretical Computer
  Science, vol.~47, pp. 141--171. American Mathematical Society (1999).
\newblock \doi{10.1090/dimacs/047/07}

\bibitem{branin}
Forrester, A., Sobester, A., Keane, A.: Engineering Design via Surrogate
  Modelling, pp. 195--203.
\newblock John Wiley {\&} Sons, Ltd (2008).
\newblock \doi{10.1002/9780470770801.app1}

\bibitem{Frazier2018}
Frazier, P.I.: Bayesian optimization.
\newblock In: Recent Advances in Optimization and Modeling of Contemporary
  Problems, pp. 255--278. INFORMS TutORials in Operations Research (2018).
\newblock \doi{10.1287/educ.2018.0188}

\bibitem{Ghadimi2015}
Ghadimi, S., Lan, G.: Accelerated gradient methods for nonconvex nonlinear and
  stochastic programming.
\newblock Mathematical Programming \textbf{156}(1-2), 59--99 (2015).
\newblock \doi{10.1007/s10107-015-0871-8}

\bibitem{Ghadimi2014}
Ghadimi, S., Lan, G., Zhang, H.: Mini-batch stochastic approximation methods
  for nonconvex stochastic composite optimization.
\newblock Mathematical Programming \textbf{155}(1-2), 267--305 (2014).
\newblock \doi{10.1007/s10107-014-0846-1}

\bibitem{Gheribi2011}
Gheribi, A.E., Robelin, C., Digabel, S.L., Audet, C., Pelton, A.D.: Calculating
  all local minima on liquidus surfaces using the {FactSage} software and
  databases and the mesh adaptive direct search algorithm.
\newblock The Journal of Chemical Thermodynamics \textbf{43}(9), 1323--1330
  (2011).
\newblock \doi{10.1016/j.jct.2011.03.021}

\bibitem{Hu2019}
Hu, W., Li, C.J., Li, L., Liu, J.G.: On the diffusion approximation of
  nonconvex stochastic gradient descent.
\newblock Annals of Mathematical Sciences and Applications \textbf{4}(1), 3--32
  (2019).
\newblock \doi{10.4310/amsa.2019.v4.n1.a1}

\bibitem{huyer2008snobfit}
Huyer, W., Neumaier, A.: {SNOBFIT} -- stable noisy optimization by branch and
  fit.
\newblock ACM Transactions on Mathematical Software \textbf{35}(2), 1--25
  (2008).
\newblock \doi{10.1145/1377612.1377613}

\bibitem{jin2018local}
Jin, C., Liu, L.T., Ge, R., Jordan, M.I.: On the local minima of the empirical
  risk.
\newblock In: Advances in Neural Information Processing Systems, pp. 4896--4905
  (2018)

\bibitem{mohan21}
Krishnamoorthy, M., Schulz, H., Ju, X., Wang, W., Leyffer, S., Marshall, Z.,
  Mrenna, S., Müller, J., Kowalkowski, J.B.: Apprentice for event generator
  tuning.
\newblock EPJ Web of Conferences \textbf{251}, 03060 (2021).
\newblock \doi{10.1051/epjconf/202125103060}

\bibitem{Krityakierne2015}
Krityakierne, T., Shoemaker, C.A.: {SOMS}: {SurrOgate} {MultiStart} algorithm
  for use with nonlinear programming for global optimization.
\newblock International Transactions in Operational Research \textbf{24}(5),
  1139--1172 (2015).
\newblock \doi{10.1111/itor.12190}

\bibitem{Kushner1964}
Kushner, H.J.: A new method of locating the maximum point of an arbitrary
  multipeak curve in the presence of noise.
\newblock Journal of Basic Engineering \textbf{86}(1), 97 (1964).
\newblock \doi{10.1115/1.3653121}

\bibitem{Larson2018}
Larson, J., Wild, S.M.: Asynchronously parallel optimization solver for finding
  multiple minima.
\newblock Mathematical Programming Computation \textbf{10}(3), 303--332 (2018).
\newblock \doi{10.1007/s12532-017-0131-4}

\bibitem{li2017hyperband}
Li, L., Jamieson, K., DeSalvo, G., Rostamizadeh, A., Talwalkar, A.: Hyperband:
  A novel bandit-based approach to hyperparameter optimization.
\newblock The Journal of Machine Learning Research \textbf{18}(1), 6765--6816
  (2017)

\bibitem{Zhenqin1987}
Li, Z., Scheraga, H.A.: Monte {Carlo}-minimization approach to the
  multiple-minima problem in protein folding.
\newblock Proceedings of the National Academy of Sciences of the United States
  of America \textbf{84}(19), 6611--6615 (1987).
\newblock \doi{10.1073/pnas.84.19.6611}

\bibitem{Locatelli1998}
Locatelli, M.: Relaxing the assumptions of the multilevel single linkage
  algorithm.
\newblock Journal of Global Optimization \textbf{13}(1), 25--42 (1998).
\newblock \doi{10.1023/a:1008246031222}

\bibitem{maclaurin2015gradient}
Maclaurin, D., Duvenaud, D., Adams, R.: Gradient-based hyperparameter
  optimization through reversible learning.
\newblock In: International Conference on Machine Learning, pp. 2113--2122
  (2015)

\bibitem{Mathesen2020}
Mathesen, L., Pedrielli, G., Ng, S.H., Zabinsky, Z.B.: Stochastic optimization
  with adaptive restart: A framework for integrated local and global learning.
\newblock Journal of Global Optimization \textbf{79}(1), 87--110 (2020).
\newblock \doi{10.1007/s10898-020-00937-5}

\bibitem{JJMSMW09}
Mor{\'e}, J.J., Wild, S.M.: Benchmarking derivative-free optimization
  algorithms.
\newblock SIAM Journal on Optimization \textbf{20}(1), 172--191 (2009).
\newblock \doi{10.1137/080724083}

\bibitem{Nguyen2017}
Nguyen, V., Rana, S., Gupta, S., Li, C., Venkatesh, S.: Budgeted batch
  {Bayesian} optimization with unknown batch sizes.
\newblock arXiv:1703.04842  (2017)

\bibitem{BOFernando}
Nogueira, F.: {Bayesian Optimization}: Open source constrained global
  optimization tool for {Python} (2014--).
\newblock \urlprefix\url{https://github.com/fmfn/BayesianOptimization}

\bibitem{Peri2012AMG}
Peri, D., Tinti, F.: A multistart gradient-based algorithm with surrogate model
  for global optimization.
\newblock Communications in Applied and Industrial Mathematics \textbf{3}(1)
  (2012).
\newblock \doi{10.1685/journal.caim.393}

\bibitem{Regis2012}
Regis, R.G., Shoemaker, C.A.: A quasi-multistart framework for global
  optimization of expensive functions using response surface models.
\newblock Journal of Global Optimization \textbf{56}(4), 1719--1753 (2012).
\newblock \doi{10.1007/s10898-012-9940-1}

\bibitem{RinnooyKan1987}
{Rinnooy Kan}, A.H.G., Timmer, G.T.: Stochastic global optimization methods
  part {I}: Clustering methods.
\newblock Mathematical Programming \textbf{39}(1), 27--56 (1987).
\newblock \doi{10.1007/bf02592070}

\bibitem{RinnooyKan1987ii}
{Rinnooy Kan}, A.H.G., Timmer, G.T.: Stochastic global optimization methods
  part {II}: Multi level methods.
\newblock Mathematical Programming \textbf{39}(1), 57--78 (1987).
\newblock \doi{10.1007/bf02592071}

\bibitem{Shashaani2018}
Shashaani, S., Hashemi, F.S., Pasupathy, R.: {ASTRO}-{DF}: A class of adaptive
  sampling trust-region algorithms for derivative-free stochastic optimization.
\newblock {SIAM} Journal on Optimization \textbf{28}(4), 3145--3176 (2018).
\newblock \doi{10.1137/15m1042425}

\bibitem{Shaydulin2019}
Shaydulin, R., Safro, I., Larson, J.: Multistart methods for quantum
  approximate optimization.
\newblock In: Proceedings of the IEEE High Performance Extreme Computing
  Conference (2019).
\newblock \doi{10.1109/hpec.2019.8916288}

\bibitem{Wessing2017}
Wessing, S., Preuss, M.: The true destination of {EGO} is multi-local
  optimization.
\newblock In: {IEEE} Latin American Conference on Computational Intelligence
  (2017).
\newblock \doi{10.1109/la-cci.2017.8285677}

\bibitem{Zheng21}
Zheng, R., Li, M.: Multistart global optimization with tunnelling and an
  evolutionary strategy supervised by a martingale.
\newblock Engineering Optimization pp. 1--19 (2021).
\newblock \doi{10.1080/0305215x.2021.1940989}

\bibitem{zhou2018quantum}
Zhou, L., Wang, S.T., Choi, S., Pichler, H., Lukin, M.D.: Quantum approximate
  optimization algorithm: Performance, mechanism, and implementation on
  near-term devices.
\newblock Physical Review X \textbf{10}(2) (2020).
\newblock \doi{10.1103/physrevx.10.021067}

\bibitem{Zilinskas19}
{\v{Z}}ilinskas, A., Gillard, J., Scammell, M., Zhigljavsky, A.: Multistart
  with early termination of descents.
\newblock Journal of Global Optimization \textbf{79}(2), 447--462 (2019).
\newblock \doi{10.1007/s10898-019-00814-w}

\end{thebibliography}
\framebox{\parbox{.90\linewidth}{\scriptsize The submitted manuscript has been created by
        UChicago Argonne, LLC, Operator of Argonne National Laboratory (``Argonne'').
        Argonne, a U.S.\ Department of Energy Office of Science laboratory, is operated
        under Contract No.\ DE-AC02-06CH11357.  The U.S.\ Government retains for itself,
        and others acting on its behalf, a paid-up nonexclusive, irrevocable worldwide
        license in said article to reproduce, prepare derivative works, distribute
        copies to the public, and perform publicly and display publicly, by or on
        behalf of the Government.  The Department of Energy will provide public access
        to these results of federally sponsored research in accordance with the DOE
        Public Access Plan \url{http://energy.gov/downloads/doe-public-access-plan}.}}

\clearpage
\appendix

\section{Proofs}
Below are the proofs of \lemref{RK} and \lemref{RK2}.

\begin{proof}[Proof of Lemma~\ref{lem:RK}]
	First, consider the sets
	\begin{align}
		\cG(a;r;n)&:=\left \{ x \in  \cD:  \| x- a\|\leq r \text{ and }  [f(x)-f(a)] < 
		\e_n(x;a) \right \} \text{ and }
		\label{eq:L0}
		\\
		\cC(a;r)&:= \left\{x\in \cD: \| x- a\|\leq r \text{ and } 
		[f(x)- f(a)] < 0 \right\}.
		\label{eq:L01}
	\end{align}
	Now, observe that using Chebyschev's inequality and \assref{Fhat},
	for all $x \in \cG(a;r;n)$,
	\begin{align} 
		\nonumber P_{\xi} &\left(\hat f_n(x) - \hat f_n(a) 
		> \e_n(x;a) \right)
		\\
		\nonumber
		&\leq P_{\xi}\left( \left| [\hat f_n(x)
		-f(x)] - [\hat f_n(a)-f(a)] \right| > \e_n(x;a) - [f(x)-f(a)] \right)
		\\
		\nonumber
		&\leq \frac{1}{(\e_n(x;a)-[f(x)-f(a)] )^2}\E[\xi]{\left([\hat f_n(x)
			-f(x)] - [\hat f_n(a)-f(a)]\right)^2} 
		\\ 
		\nonumber 
		&= \frac{1}{(\e_n(x;a)-[f(x)-f(a)]
			)^2}\Bigg(\Var[\xi]{\hat f_n(x)}+ \Var[\xi]{\hat f_n(a)} 
		- 2\Cov[\xi]{\hat 
			f_n(x),\hat f_n(a)}\Bigg) 
		\\ 
		&= \frac{\Var[\xi]{\hat f_n(x)-\hat f_n(a)}}{(\e_n(x;a)-[f(x)-f(a)] )^2}. 
		\label{eq:L1}
	\end{align} 
	
	Since $x \in \cG(a;r;n)$ implies $\e_n(x;a)> [f(x)- f(a)] $, therefore
		\begin{align} 
			\nonumber
			&\left\{x \in \cD: \frac{\Var[\xi]
				{\hat f_n(x)-\hat f_n(a) 
			}}{(\e_n(x;a)-[f(x)-f(a)] )^2}
			< \beta \right\} 
			\\
			&= \left\{ x \in \cD: f(x)- f(a) < \e_n(x;a) -
			\sqrt{\beta^{-1}{\Var[\xi]{\hat f_n(x)-\hat f_n(a) }}} \right\}. 
			\label{eq:L02}
		\end{align} 
	
	Next, recall the definition of $\A{a}{r}{n}{\beta}$, 
	and observe that equation~\eqref{eq:L1} and~\eqref{eq:L02} together imply that 
	\begin{align}\label{eq:L3}
		\nonumber
		\bigg\{x\in \cD: \| x- a\|\leq r \text{ and } 
		f(x)- f(a) < \e_n(x;a) &
		\\- \sqrt{\beta^{-1}{\Var[\xi]{\hat f_n(x)-\hat f_n(a) }}} 
		\bigg\} &\subseteq \A{a}{r}{n}{\beta},
	\end{align}
	for all $x \in \cG(a;r;n)$. Since $\e_n(x;a) = \sqrt{\frac{\Var[\xi]{\hat 
				f_n(x)-\hat f_n(a) }}{\beta}}$, equation~\eqref{eq:L3} 
	implies that
	\begin{align} 
		\cC(a;r) \cap \cG(a;r;n)  \subseteq \A{a}{r}{n}{\beta} \cap \cG(a;r;n).
		\label{eq:L3a}
	\end{align}
	Observe that  $\cC(a;r)\subseteq \cG(a;r;n)$ and $\A{a}{r}{n}{\beta} \cap 
	\cG(a;r;n) \subseteq \A{a}{r}{n}{\beta}  $; Therefore it follows 
	from~\eqref{eq:L3a} that 
	\begin{align} 
		\cC(a;r)  \subseteq \A{a}{r}{n}{\beta}.
		\label{eq:L4}
	\end{align}
	Recall that $\cT_{\omega}$ is the union of $\omega$-radius balls centered at 
	stationary points of $f$ in $\cD$. Now consider $a \in
	\cD \setminus (\cQ_{\tau}\cup \cT_{\omega}) \}$. Define the set \[ \cE(a;r;\rho):=
	\left\{x\in \cD: \| x- a\|\leq r \text{ and } \nabla f(a)^T(x-a) + \frac{1}{2} \rho
	r^2 \leq 0 \right\}, \]
	where 
	$\rho$ is the largest eigenvalue of $\nabla^2 f(x)$ for $x \in \cD$. 
	Using 
	the Taylor expansion of $f$ around $a$, we know that for
	all $x\in \cD$, with $\|x-a\|\leq r$, 
	there exists a $\theta\in [0,1]$ such
	that 
	\begin{align} 
		f(x) - f(a) &= \nabla f(a)^T(x-a) + \frac{1}{2}(x-a)^T \nabla^2 f(a + \theta(x-a) )
		(x-a) 
		\label{eq:l1}.
	\end{align}
	For ease of reference, let $H = \nabla^2 f(a + \theta(x-a) ) = H$ and $v = x-a$.
	
	Since $f$ is twice continuously differentiable by~\subassref{prob}{c2}, its  
	Hessian is always real and symmetric, satisfying
	$v^T H v \leq \rho v^Tv$ for all $v \in \mathbb{R}^d$. It follows
	from~\eqref{eq:l1} that
	\begin{align}
		f(x) - f(a) &\leq \nabla f(a)^T(x-a) + \frac{1}{2} \rho (x-a)^T (x-a) 
		\leq \nabla f(a)^T(x-a) + \frac{1}{2} \rho r^2.
		\label{eq:l2}
	\end{align} 
	Equation~\eqref{eq:l2} implies that $\cE(a;r;\rho)
	\subseteq \cC(a;r) \subseteq \A{a}{r}{n}{\beta}$.
	For a given $a\in \cD$, \( m( \B{a}{r} ) \leq r^d \frac{\pi^{d/2} }{ \fGamma(1+ d/2)} \), 
	where $\fGamma(\cdot)$ is the gamma function. 
	Now using the lower bound on $m(\cE(a;r;\rho))$ derived in Lemma 7 of~\cite{RinnooyKan1987}, 
	we obtain 
	\begin{align}
		\lim_{r \to 0} \frac{m(\A{a}{r}{n}{\beta})}{m ( \B{a}{r} )} \geq 
		\lim_{r \to 0} \frac{m(\cE(a;r;\rho))}{m ( \B{a}{r} )} \geq  \lim_{r
			\to 0} \frac{1}{2} - \frac{\pi^{-d/2}}{2p\fGamma(1 + \frac{d-1}{2})} 
		\frac{\rho r}{2} = \frac{1}{2},
		\label{eq:L6}
	\end{align} 
	where $p = \min\{ \|\nabla f(a)\|, a \in
	\cD\setminus (\cQ_{\tau}\cup \cT_{\omega}) \}$ for any $k \in \N$. (Note that $p
	> 0$ by construction.) 
	Therefore~\eqref{eq:L6} implies that for all $a \in \cD 
	\setminus (\cQ_{\tau}\cup \cT_{\omega}) $
	\[ \lim_{r
		\to 0} \frac{m(\A{a}{r}{n}{\beta})}{m ( \B{a}{r} )} \geq \frac{1}{2}.  \] 
	\flushright \qed
\end{proof}

\begin{proof}[Proof of Lemma~\ref{lem:RK2}]
	For any $a\in$ $S_k\setminus (
	\cQ_{\tau}\cup \cT_{\omega})$ and using the fact that $S_k$ is sampled uniformly, for
	a vanishing sequence $\{r_k\}$ the \lemref{RK} implies 
	that there exists a $k_0\geq 1$ such that for all $k\geq k_0$, 
	\begin{align}
		\nonumber
		P[\{ t_k' > 0\}] &\leq \bigcup_{a \in  S_k\setminus (
			\cQ_{\tau}\cup \cT_{\omega})}  \left(1-\frac{m(\A{a}{r_k}{n}{\beta})}{m(\cD)} \right)^{|S_k|-1} \\
		\nonumber
		&\leq |S_k\setminus (
		\cQ_{\tau}\cup \cT_{\omega})| \left(1-\frac{m(\A{a}{r_k}{n}{\beta})}{m(\cD)} \right)^{|S_k|-1} \\
		&\leq |S_k| \left(1-\frac{1}{2} \frac{m(\B{a}{r_k})}{m(\cD)} \right)^{|S_k|-1}
		\label{eq:eq2},
	\end{align}
	where the first inequality bounds the probability that  for any sampled
	point in $S_k\setminus ( \cQ_{\tau}\cup \cT_{\omega})$ none of the remaining
	sampled points are in  $\A{a}{r_k}{n}{\beta}$ (see~(\textbf{S1})   of~\tabref{CON}).
	The second inequality follows from Boole's inequality. The last
	inequality in~\eqref{eq:eq2} is due to~\lemref{RK} and the fact  that
	$|S_k\setminus ( \cQ_{\tau}\cup \cT_{\omega})|<  |S_k|$.  
	
	Recall that for any $a\in \cD$ and $r_k$ sufficiently small such that
	$\B{a}{r_k} \subseteq \cD$, 
	$m( \B{a}{r_k} ) = r_k^d \frac{\pi^{d/2} }{ \fGamma(1+ d/2)}$. 
	Combined with this fact, for any $\sigma>0$ and choosing a sequence 
	$r_k = \frac{1}{\sqrt{\pi}} \sqrt[d]{\fGamma(1+{d/2}) m(\cD) \sigma \frac{\log|S_k|}{|S_k|}}$,
	we have 
	\[
	P[\{ t_k' > 0\}] 
	\leq |S_k| \left(1- \frac{1}{2} \frac{m(\B{a}{r_k})}{m(\cD)} \right)^{|S_k|-1}
	= |S_k|\left(1-\frac{\sigma}{2} \frac{\log|S_k|}{|S_k|} \right)^{|S_k|-1}. 
	\]
	Since $|S_k|\to \infty$ as $k\to \infty$, the fact that
	$e^{-\frac{\sigma}{2} \frac{\log|S_k|}{|S_k|}}\geq 1-\frac{\sigma}{2} \frac{\log|S_k|}{|S_k|}$
	implies 
	\begin{align}
		P[\{ t_k' > 0\}] \leq |S_k|\left(e^{-\frac{\sigma}{2} \frac{{(|S_k|-1)} \log|S_k|}{|S_k|}} \right) = |S_k|^{1-\frac{\sigma}{2}\frac{{(|S_k|-1)} }{|S_k|} }= O(|S_k|^{1-
			\frac{\sigma}{2} }).
		\label{eq:L21}
	\end{align}
	\flushright \qed
\end{proof}

\section{Shekel with $d=6$ and $d=8$}\label{app:1}

\begin{figure}[h]
    \centering
    \includegraphics[ width=1\linewidth]{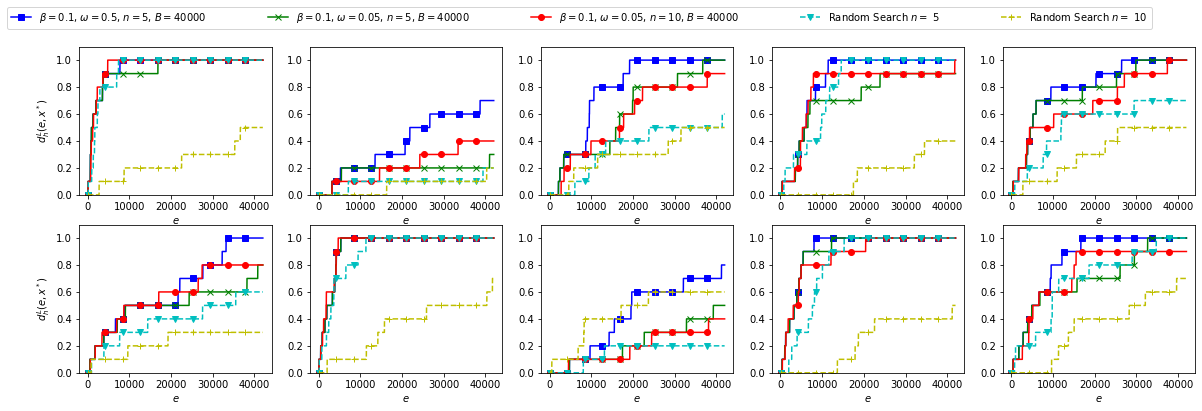}
    \caption{Data profiles for the Shekel-6D  function for finding each local minimum.
        $\zeta=10^{-4}$ and $|P|=10$. }
    \label{fig:Shekel6D}
\end{figure}

\begin{figure}[h]
    \centering
    \includegraphics[ width=1\linewidth]{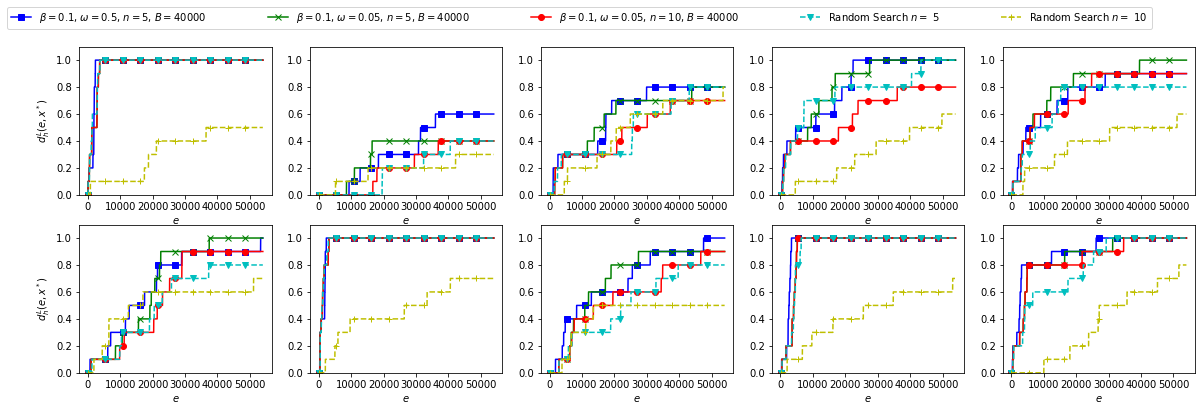}
    \caption{Data profiles for the Shekel-8D  function for finding each local minimum.
        $\zeta=10^{-4}$ and $|P|=10$. }
    \label{fig:Shekel8D}
\end{figure}


\end{document}